\documentclass[12pt]{amsart}
\usepackage[english]{babel}
\usepackage{amsmath,amsthm,amsfonts,amssymb,epsfig,color,ulem,vector,hyperref}
\usepackage[left=1in,top=1in,right=1in]{geometry}
\usepackage{enumitem}


\newtheorem{thm}{Theorem}[section]
\newtheorem{lem}[thm]{Lemma}
\newtheorem{prop}[thm]{Proposition}
\newtheorem{cor}[thm]{Corollary}
\newtheorem{df}[thm]{Definition}
\newtheorem{rem}[thm]{Remark}
\newtheorem{conj}[thm]{Conjecture}
\newtheorem{claim}[thm]{Claim}
 
\newcommand{\E}{\mathbb{E}}

\newcommand{\PP}{\mathbb{P}}
\newcommand{\R}{\mathbb{R}}
\newcommand{\Z}{\mathbb{Z}}
\newcommand{\N}{\mathbb{N}}
\newcommand{\G}{\mathcal{G}}
\newcommand{\J}{J}
\newcommand{\var}{\text{\rm Var}}
\newcommand{\e}{\varepsilon}
\newcommand{\ee}{{\rm e}}
\newcommand{\rd}{{\rm d}}

\hypersetup{colorlinks=true, linkcolor=blue, urlcolor=blue, citecolor=blue}


\begin{document}

\title[Disorder Chaos and Incongruent States in Spin Glasses]{A Relation between Disorder Chaos and Incongruent States in Spin Glasses on $\Z^d$}

\author[L.-P. Arguin]{L.-P. Arguin}            
 \address{L.-P. Arguin\\ 
 Department of Mathematics\\
 City University of New York, Baruch College and Graduate Center\\
 New York, NY 10010}
\email{louis-pierre.arguin@baruch.cuny.edu}

\author[C.M. Newman]{C.M. Newman}            
 \address{C.M. Newman\\ 
 Courant Institute of Mathematical Sciences\\
 New York, NY 10012 USA\\
  and NYU-ECNU Institute of Mathematical Sciences at NYU Shanghai\\
  3663 Zhongshan Road North, Shanghai 200062, China}
\email{newman@cims.nyu.edu}

\author[D.L.~Stein]{D.L.~Stein}            
 \address{D.L.~Stein\\ 
 Department\ of Physics and Courant Institute of Mathematical Sciences\\
  New York University\\
 	 New York, NY 10003, USA\\
	 and NYU-ECNU Institutes of Physics and Mathematical Sciences at NYU Shanghai\\
	  3663 Zhongshan Road North\\
	  Shanghai, 200062, China}
\email{daniel.stein@nyu.edu}

\date{}

\keywords{Spin glasses, Edwards-Anderson model, Variance bounds, Disorder chaos} \subjclass[2010]{Primary: 82B44}

\maketitle

\begin{abstract}
We derive lower bounds for the variance of the difference of energies between incongruent ground states, i.e., states with edge overlaps strictly less than one, of the Edwards-Anderson model on $\Z^d$.
The bounds highlight a relation between the existence of incongruent ground states 
and the absence of edge disorder chaos.
In particular, it suggests that the presence of disorder chaos is necessary for the variance to be of order less than the volume.
In addition, a relation is established between the scale of disorder chaos and the size of critical droplets.
The results imply a long-conjectured relation between the droplet theory of Fisher and Huse and the absence of incongruence.
\end{abstract}
 

\section{Introduction}
The Edwards-Anderson (EA) model is a nearest-neighbor model of a realistic spin glass in finite dimensions~\cite{EA75}.
As opposed to the infinite-range version, the Sherrington-Kirkpatrick (SK) model~\cite{SK75}, 
the critical behavior of the EA~model and in particular the existence of a phase transition and the nature of this phase transition remain 
elusive from both mathematical and physical perspectives. 
We refer to~\cite{Mac84,BM85,MPV87,FH88,KM00,PY00,NS03} and references therein for more details on competing pictures for the low-temperature thermodynamic structure of the EA~model.
In the case of the SK model, it is known that there exist at low enough temperature states with edge
overlap\footnote{In the SK model, unlike in the EA~model, edge and spin overlaps are trivially related.} strictly less than one~\cite{Parisi79,Parisi83,MPSTV84a,MPSTV84b,MPV87}. Such states are said to be {\it incongruent}.
The question of existence of incongruent ground states at zero temperature for the EA model in finite dimensions is the main motivation of the present paper.
More concretely, we relate the existence of such incongruent states to non-trivial lower bounds for the variance of the difference of ground state energies, which we relate in turn
to the presence and extent of edge disorder chaos. 
\\

\subsection{Background}

Consider a finite subset $\Lambda\subset \Z^d$; $\Lambda$ is considered to be a cube centered at the origin with side-length $L$ so that $|\Lambda|=L^d$.
The set of nearest-neighbor edges $\{x,y\}$ with $|x-y|=1$ and $x,y\in \Lambda$ is denoted by $\Lambda^*$.
We denote the {\it couplings} on $(\Z^d)^{*}$, the set of all nearest-neighbor edges of $\Z^d$, by
$\J=(J_{xy}, \{x,y\}\in (\Z^d)^{*})$.
We suppose that the couplings are independent and identically distributed Gaussian random variable with mean~$0$ and variance $1$. 
The distribution of $J$ is denoted by $\nu$.

The EA Hamiltonian on $\Lambda\subset \Z^d$ for the disorder $J$ is the Ising-type Hamiltonian with random couplings $J$:
\begin{equation}
\label{eqn: H}
H_{\Lambda, J}(\eta)=\sum_{\{x,y\}\in \Lambda^*} -J_{xy}\eta_x\eta_y\ ,
\end{equation}
where $\eta\in \{-1,+1\}^\Lambda$ is a {\it spin configuration} in $\Lambda$.

\begin{df}
\label{df: GS}
A spin configuration $\sigma\in \{-1,+1\}^{\Z^d}$ is a {\it ground state} for the EA Hamiltonian for the couplings $J$ if for every finite subset $\mathcal B$ of $\Z^d$
the configuration $\sigma$ restricted to $\mathcal B$ minimizes
\begin{equation}
\label{eqn: GS}
H_{\mathcal B, J}(\eta)+ \sum_{\{x,y\}\in \partial \mathcal B } -J_{xy} \eta_x\sigma_y \ \text{over  $\eta\in\{-1,+1\}^\mathcal B$} ,
\end{equation}
\noindent where $\partial \mathcal B$ stands for the edges with one vertex $x$ in $\mathcal B$ and one vertex $y$ in $\mathcal B^c$.
\end{df}
The minimizer of \eqref{eqn: GS} is unique $\nu$-a.s.~for the boundary condition given by $\sigma$ in $\mathcal B^c$.
The above definition is equivalent to the property that for any finite subset $\mathcal B$ of $ \Z^d$
\begin{equation}
\label{eqn: GS2}
\sum_{\{x,y\}\in \partial \mathcal B} J_{xy}\sigma_x\sigma_y\geq 0\ .
\end{equation}
Consider the {\it edge overlap} between $\sigma^1$, $\sigma^2$ in $\Lambda$:
\begin{equation}
\label{overlap}
Q_{\Lambda}(\sigma^1,\sigma^2)=\frac{1}{|\Lambda^*|} \sum_{\{x,y\}\in \Lambda^*} \sigma^1_x\sigma^1_y \sigma^2_x\sigma^2_y \ .
\end{equation}
Two ground states are said to be {\it incongruent} if
\begin{equation}
\label{eqn: incongruent}
\limsup_{\Lambda \to \Z^d} Q_{\Lambda}(\sigma^1, \sigma^2)<1\ . 
\end{equation}
In other words, there is a strictly positive fraction of edges in $\Lambda$ for which $\sigma_x^1\sigma^1_y\neq \sigma_x^2\sigma^2_y$.\\

We write $\G(J)\subset \{-1,+1\}^{\Z^d}$ for the set of infinite-volume ground states for the couplings $J$. 
In Section \ref{sect: metastate}, we recall the construction of certain measures on $\G(J)$ from limits of finite-volume ground states with specified boundary conditions.
Such a measure will be denoted by $\kappa_J$ and referred to as a {\it metastate}.
From these measures, it is possible to study three questions: 

\medskip

\begin{enumerate}[label=(\roman*)]
\item Is there more than one sub-sequential limit $\kappa_J$ along an infinite sequence of volumes?
\item How many ground states are in the support of $\kappa_J$ ? 
\item Do there exist two or more {\it incongruent\/} ground states in the support of these measures?
\end{enumerate}

\medskip 

To study these questions, we consider the probability measure $\PP$ on triples $(J,\sigma^1, \sigma^2)$ where
\begin{equation}
\label{eqn: prob}
\rd \PP= \rd \nu (J) \times \rd \kappa^{(1)}_{J}(\sigma^1)\times \rd \kappa^{(2)}_{J}(\sigma^2)\ ,
\end{equation}
where $\kappa_J^{(1)}, \kappa_J^{(2)}$ are two metastates. 
The measure $\PP$ samples the disorder $J$ and then two ground states for that disorder according to $\kappa^{(1)}_J\times\kappa^{(2)}_J$.
Questions (i), (ii), and (iii) were answered for the half-plane in \cite{ADNS10} (see also \cite{AD14} for general results on the set of ground states).
This paper is mainly concerned with Question (iii) for the model on $\Z^d$.
Question (iii) is narrower than (i) and (ii) in general, except when periodic boundary conditions are considered.
In that particular case, $\PP$ is translation-invariant and the existence of a single edge where $\sigma^1$ and $\sigma^2$ differ ensures 
the existence of a positive density of such edges.

\subsection{Main Results}

It is possible to modify the couplings locally under the measure $\rd \PP$ as follows. 
First, we redefine $\PP$ to add an extra independent copy $J'$ of the couplings:
\begin{equation}
\label{eqn: prob3}
\rd \PP= \rd \nu (J)\times  \rd \nu (J')\times  \rd \kappa^{(1)}_{J}(\sigma^1)\times \rd \kappa^{(2)}_{J}(\sigma^2)\ .
\end{equation}
We consider an interpolation $J(t)$ parametrized by $t\geq 0$ where
\begin{equation}
\label{eq:t}
J_{xy}(t)=\ee^{-t}J_{xy}+\sqrt{1-\ee^{-2t}}J'_{xy} \qquad \text{ if $\{x,y\}\in \Lambda^*$,}
\end{equation}
and $J_{xy}(t)=J_{xy}$ if $\{x,y\}\notin \Lambda^*$.
For each ground state $\sigma^1$ and $\sigma^2$, we will construct in Section~\ref{sect: metastate} a measurable map $t\mapsto \sigma^i(t)$, $i=1,2$, which for each $t$ gives a ground state for the value of the interpolated couplings at $t$. 
(We slightly abuse notation here since we use $\sigma^i$ for the map as well as for the initial point $\sigma^i=\sigma^i(0)$.)
It turns out that the distribution of the ground states $\sigma^i(t)$ under $\kappa^{(i)}_{J}$ is exactly the one of $\sigma^i$ under $\kappa^{(i)}_{J(t)}$, cf.~Section \ref{sect: metastate}.

The first main result of this paper is to establish a lower bound for the variance of the difference of ground state energies in terms of local coupling modifications.
\begin{thm}
\label{thm: main}
For all $t>0$, 
\begin{equation}
\label{eqn: variance bound}
\begin{aligned}
&\var\Big(H_{\Lambda,\J}(\sigma^1)-H_{\Lambda,\J}(\sigma^2)\Big)\geq  \\
&2|\Lambda^*|\int_0^t\left\{\E\Big[1-Q_\Lambda(\sigma^1,\sigma^2)\Big]-\sum_{i=1,2} \Big(2\cdot\E\big[1-Q_\Lambda(\sigma^i,\sigma^i(s))\big]\Big)^{1/2}\right\}\ee^{-s}\rd s \ .
\end{aligned}
\end{equation}
\end{thm}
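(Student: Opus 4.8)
The plan is to obtain \eqref{eqn: variance bound} from the Gaussian (Ornstein--Uhlenbeck) interpolation identity for the variance, and then to extract the overlap quantities by elementary Cauchy--Schwarz estimates. Set
\[
X=H_{\Lambda,\J}(\sigma^1)-H_{\Lambda,\J}(\sigma^2)=\sum_{\{x,y\}\in\Lambda^*}\J_{xy}\big(\sigma^2_x\sigma^2_y-\sigma^1_x\sigma^1_y\big).
\]
Because the minimizer in Definition~\ref{df: GS} is unique and stable under small perturbations of $\J$ ($\nu$-a.s.), each ground state is locally constant as a function of $\J$, so the edge products $\sigma^i_x\sigma^i_y$ do not vary with $\J_{xy}$ and
\[
\partial_{\J_{xy}}X=\sigma^2_x\sigma^2_y-\sigma^1_x\sigma^1_y \qquad\text{a.e.}
\]
This discrete Hellmann--Feynman identity is the only place the ground-state property enters.

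First I would condition on the auxiliary randomness used to draw $\sigma^1,\sigma^2$ from $\kappa^{(1)}_{\J},\kappa^{(2)}_{\J}$, so that $X$ becomes a genuine function of the Gaussian field $\J$. For such a function the interpolation formula associated with \eqref{eq:t} reads
\[
\var(X)=\int_0^\infty \ee^{-s}\sum_{\{x,y\}\in\Lambda^*}\E\big[\partial_{\J_{xy}}X(\J)\,\partial_{\J_{xy}}X(\J(s))\big]\,\rd s,
\]
the factor $\ee^{-s}$ coming from the commutation of the gradient with the semigroup (Mehler's formula), and $\partial_{\J_{xy}}X(\J(s))$ being read off along the map $s\mapsto\sigma^i(s)$, which by construction satisfies $\sigma^i(s)\sim\kappa^{(i)}_{\J(s)}$. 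Expanding $X$ in Hermite chaoses shows the integrand equals $\sum_{k\ge1}k\,\|X_k\|^2\,\ee^{-ks}\ge0$, with $X_k$ the degree-$k$ chaos component of $X$, so it may be truncated at any $t>0$ without losing the inequality; combined with the law of total variance $\var(X)\ge\E\big[\var(X\mid\text{aux.})\big]$ this gives
\[
\var(X)\ \ge\ \int_0^t \ee^{-s}\sum_{\{x,y\}\in\Lambda^*}\E\big[u_{xy}\,v_{xy}\big]\,\rd s,
\]
where $u_{xy}=\sigma^2_x\sigma^2_y-\sigma^1_x\sigma^1_y$ and $v_{xy}=\sigma^2_x(s)\sigma^2_y(s)-\sigma^1_x(s)\sigma^1_y(s)$.

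The remaining work is algebraic. The diagonal part is exact: $\sum_{\{x,y\}}u_{xy}^2=2|\Lambda^*|\big(1-Q_\Lambda(\sigma^1,\sigma^2)\big)$, producing the leading term $2|\Lambda^*|\,\E[1-Q_\Lambda(\sigma^1,\sigma^2)]$. Writing $u_{xy}v_{xy}=u_{xy}^2+u_{xy}(v_{xy}-u_{xy})$ and applying Cauchy--Schwarz to the cross term,
\[
\Big|\sum_{\{x,y\}}u_{xy}(v_{xy}-u_{xy})\Big|\le\Big(\sum_{\{x,y\}}u_{xy}^2\Big)^{1/2}\Big(\sum_{\{x,y\}}(v_{xy}-u_{xy})^2\Big)^{1/2}.
\]
Since $v_{xy}-u_{xy}=\big(\sigma^2_x(s)\sigma^2_y(s)-\sigma^2_x\sigma^2_y\big)-\big(\sigma^1_x(s)\sigma^1_y(s)-\sigma^1_x\sigma^1_y\big)$, the triangle inequality in $\ell^2$ splits the second factor into the two self-differences, each equal to $\big(2|\Lambda^*|(1-Q_\Lambda(\sigma^i,\sigma^i(s)))\big)^{1/2}$. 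Bounding the first factor deterministically by $\big(\sum u_{xy}^2\big)^{1/2}\le 2|\Lambda^*|^{1/2}$ (as $u_{xy}\in\{-2,0,2\}$) and using Jensen's inequality to move $\E$ inside the square root yields precisely $2|\Lambda^*|\sum_{i=1,2}\big(2\,\E[1-Q_\Lambda(\sigma^i,\sigma^i(s))]\big)^{1/2}$ for the error, whence the stated bound.

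The hard part will be the rigorous justification of the interpolation identity in the presence of metastate randomness and of the non-smoothness of ground states: that the measurable map $s\mapsto\sigma^i(s)$ of Section~\ref{sect: metastate} genuinely realizes the perturbed ground states with $\sigma^i(s)\sim\kappa^{(i)}_{\J(s)}$, so that differentiating $X$ along it is legitimate, and that the a.e.\ local constancy of ground states underpins both the Hellmann--Feynman step and the nonnegativity of the chaos integrand that licenses truncation at $t$. Once these structural facts are in place the constants fall out exactly; in particular the crude bound $\big(\sum u_{xy}^2\big)^{1/2}\le 2|\Lambda^*|^{1/2}$ is exactly what converts the two self-overlap factors into the advertised form $\big(2\,\E[1-Q_\Lambda(\sigma^i,\sigma^i(s))]\big)^{1/2}$.
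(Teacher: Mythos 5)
Your proposal is correct and follows essentially the same route as the paper's proof: a conditional-variance reduction (conditioning on $(J_{\Lambda^c},\vec E^1,\vec E^2)$, whose independence from $J_\Lambda$ is exactly the content of Lemma~\ref{lem: metastate}), the Gaussian interpolation identity truncated at $t$ by positivity of the integrand, the a.e.\ derivative $\partial_{J_b}X=\sigma^2_b-\sigma^1_b$ from local constancy of the ground-state maps, and then elementary $\ell^2$ estimates plus Jensen. Your closing algebra (writing $uv=u^2+u(v-u)$, Cauchy--Schwarz, and the crude bound $\big(\sum_b u_b^2\big)^{1/2}\le 2|\Lambda^*|^{1/2}$) is just a repackaging of the paper's reverse-triangle-inequality manipulation with $\|\sigma-\sigma''\|\le 2$, and yields the identical constants.
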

The main interest of this bound is the explicit connection between incongruence, represented by the first expectation, and disorder chaos, or rather the absence thereof, represented by the second expectation.
\begin{df}[Absence of Disorder Chaos]
\label{df: ADC}
We say that there is absence of disorder chaos at scale $\alpha$, $0\leq \alpha\leq 1$, for $\PP$, if for any $\e>0$
there exist $A_\e$ with $\PP(A_\e)>1-\e$ and $C=C(\e)>0$, such that
$$
Q_\Lambda(\sigma^i, \sigma^i(t))>1-\e \text{ on $A_\e$, $i=1,2$,}
$$
 for all $t\leq C|\Lambda|^{-\alpha}$ and all $\Lambda$ large enough.
\end{df}
In other words, there is absence of disorder chaos at scale $\alpha$ if with large probability, the fraction of edges for which $\sigma^1(t)$ is different from $\sigma^1(0)$
remains small for $t\leq C|\Lambda|^{-\alpha}$. 
Let $\mathcal I$ be the event that incongruent states exist, that is
\begin{equation}
\mathcal I=\left\{(J,\sigma^1, \sigma^2): \limsup_{\Lambda \to \Z^d} Q_{\Lambda}(\sigma^1, \sigma^2)<1\right\}\ .
\end{equation}
Definition~\ref{df: ADC} and Theorem~\ref{thm: main} imply:
\begin{cor}
\label{cor: ADC-> variance}
Let $\PP$ be as in Equation \eqref{eqn: prob} with $\PP(\mathcal I)>0$.  If there is absence of disorder chaos at scale $0\leq \alpha \leq 1$, then there exists $C>0$ independent of $\Lambda$ such that 
\begin{equation}
\label{eqn: var bound 1}
\var\Big(H_{\Lambda,\J}(\sigma^1)-H_{\Lambda,\J}(\sigma^2)\Big)\geq C |\Lambda|^{1-\alpha}\ .
\end{equation}
\end{cor}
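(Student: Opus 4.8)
The plan is to apply Theorem~\ref{thm: main} with a well-chosen time $t=t(\Lambda)$ after bounding the integrand from below by a positive constant. That integrand is the incongruence expectation $\E[1-Q_\Lambda(\sigma^1,\sigma^2)]$ minus the chaos term $\sum_{i=1,2}\big(2\,\E[1-Q_\Lambda(\sigma^i,\sigma^i(s))]\big)^{1/2}$. I would first establish a uniform-in-$\Lambda$ lower bound on the incongruence expectation, then use absence of disorder chaos to make the chaos term as small as desired once $s$ is restricted to $[0,C(\e)|\Lambda|^{-\alpha}]$, and finally integrate over that interval.

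\emph{First term.} The difficulty is that the hypothesis furnishes only $\PP(\mathcal I)>0$ together with the $\limsup$ statement $\limsup_\Lambda Q_\Lambda(\sigma^1,\sigma^2)<1$ on $\mathcal I$, whereas I need a deterministic lower bound on $\E[1-Q_\Lambda]$ for each fixed large cube. Since $1-Q_\Lambda\ge 0$, I would invoke Fatou's lemma along the sequence of cubes:
\begin{equation*}
\liminf_{\Lambda\to\Z^d}\E\big[1-Q_\Lambda(\sigma^1,\sigma^2)\big]\ \ge\ \E\Big[1-\limsup_{\Lambda\to\Z^d}Q_\Lambda(\sigma^1,\sigma^2)\Big],
\end{equation*}
using $\liminf_\Lambda(1-Q_\Lambda)=1-\limsup_\Lambda Q_\Lambda$. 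The right-hand integrand is nonnegative and strictly positive on $\mathcal I$, an event of positive probability, so the expectation is some $\ell>0$. Setting $a=\ell/2$, I conclude $\E[1-Q_\Lambda(\sigma^1,\sigma^2)]\ge a$ for all $\Lambda$ large enough.

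\emph{Second term.} Definition~\ref{df: ADC} supplies, for each $\e>0$, a set $A_\e$ with $\PP(A_\e)>1-\e$ and a constant $C(\e)>0$ such that $1-Q_\Lambda(\sigma^i,\sigma^i(s))<\e$ on $A_\e$ for all $s\le C(\e)|\Lambda|^{-\alpha}$ and all large $\Lambda$. Splitting the expectation over $A_\e$ and $A_\e^c$ and using $0\le 1-Q_\Lambda\le 2$ gives $\E[1-Q_\Lambda(\sigma^i,\sigma^i(s))]\le 3\e$ throughout that range of $s$, so the chaos term is at most $2\sqrt{6\e}$. Fixing $\e$ small enough that $2\sqrt{6\e}\le a/2$, the integrand in~\eqref{eqn: variance bound} is bounded below by $a/2$ for all $s\le C(\e)|\Lambda|^{-\alpha}$. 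Choosing $t=C(\e)|\Lambda|^{-\alpha}$ in Theorem~\ref{thm: main} then yields
\begin{equation*}
\var\Big(H_{\Lambda,\J}(\sigma^1)-H_{\Lambda,\J}(\sigma^2)\Big)\ \ge\ a\,|\Lambda^*|\big(1-\ee^{-C(\e)|\Lambda|^{-\alpha}}\big).
\end{equation*}
The elementary inequality $1-\ee^{-x}\ge(1-\ee^{-1})\min(x,1)$, together with $|\Lambda^*|\ge c_d|\Lambda|$ (the cube has of order $|\Lambda|$ edges), then gives $\var(\cdots)\ge C|\Lambda|^{1-\alpha}$ for all large $\Lambda$, which is the desired bound.

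The only genuinely delicate point is the first step: the hypothesis provides incongruence only with positive probability and only in the $\limsup$ sense, while Theorem~\ref{thm: main} must be fed a lower bound on $\E[1-Q_\Lambda]$ for each individual volume. The Fatou argument is precisely what converts this random, volume-dependent gap into a deterministic one. The remaining steps—the choice of $\e$ and the integration—are routine.
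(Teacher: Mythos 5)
Your proof is correct and follows essentially the same route as the paper's: Fatou's lemma converts $\PP(\mathcal I)>0$ into a uniform-in-$\Lambda$ lower bound $\E[1-Q_\Lambda(\sigma^1,\sigma^2)]\ge a>0$, the splitting over $A_\e$ and $A_\e^c$ controls the chaos term on $[0,C(\e)|\Lambda|^{-\alpha}]$, and integrating the resulting positive integrand in Theorem~\ref{thm: main} gives the $|\Lambda|^{1-\alpha}$ bound. Your write-up is merely more explicit than the paper's about the final integration step; nothing is missing.
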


\bigskip

The second main result is a relation between the size of critical droplets and the absence of disorder chaos as above.
Fix an edge $b=\{x_0,y_0\}$ in a box $\Lambda$. As a function of $J_b$, the ground state is locally constant, cf.~Section \ref{sect: metastate}.
Roughly speaking, according to \eqref{eqn: GS2}, the ground state changes as $J_b$ is increased (or decreased) when the energy of the boundary (which of course passes through~$b$) of some connected cluster of spins first becomes negative. 
This connected cluster could be infinite. We write $\mathcal D_b$ for the subset of vertices of this cluster inside $\Lambda$.
We refer to $\mathcal D_b$ as the {\it critical droplet of the edge $b$ in $\Lambda$}. 
The important quantity is the size of the boundary $\partial \mathcal D_b$ containing the edges at the boundary with one vertex in $\mathcal D_b$ and one in its complement; see Figure \ref{fig: droplet} below for an illustration.

The next theorem relates the size of critical droplet boundaries to the absence of disorder chaos.
\begin{thm}
\label{thm: ADC}
Let $\PP$ be as in Equation \eqref{eqn: prob}.
Suppose that there exist $0\leq \gamma\leq 1$ and $C<\infty$ (independent of $\Lambda)$ such that with probability one,
for all large $\Lambda$,
\begin{equation}
\label{eqn: droplet ass}
|\partial\mathcal D_b| \leq C |\Lambda|^\gamma \text{ for all $b\in |\Lambda^*|$.}
\end{equation}
Then there is absence of disorder chaos for $\PP$ at every scale $\alpha>2\gamma$.
\end{thm}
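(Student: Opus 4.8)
The plan is to track, for a fixed ground state $\sigma^i$, exactly which edges can change their bond value $\sigma_x\sigma_y$ as the coupling $J$ is interpolated to $J(t)$ via \eqref{eq:t}, and to show that under the droplet-size hypothesis \eqref{eqn: droplet ass} the total number of such edges is controlled. The key observation, alluded to in the discussion preceding the theorem, is that the ground state is a piecewise-constant function of the interpolated couplings: as $t$ increases from $0$, each coupling $J_{xy}(t)$ moves continuously, and $\sigma^i(t)$ only changes at discrete times at which some critical droplet boundary energy $\sum_{\{x,y\}\in\partial\mathcal D} J_{xy}(t)\sigma_x\sigma_y$ crosses zero, at which point the spins inside that droplet flip. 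Each such flip changes the bond values $\sigma_x\sigma_y$ precisely on the edges of $\partial\mathcal D$, and by hypothesis $|\partial\mathcal D|\leq C|\Lambda|^\gamma$. So the strategy is: bound the expected number $N(t)$ of flip events in the time interval $[0,t]$, and then bound $1-Q_\Lambda(\sigma^i,\sigma^i(t))$ by $N(t)\cdot C|\Lambda|^\gamma / |\Lambda^*|$, since each flip disagrees on at most $C|\Lambda|^\gamma$ edges.

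First I would estimate the rate of flip events. For a fixed droplet with boundary $\partial\mathcal D$, the relevant energy $X(t)=\sum_{\{x,y\}\in\partial\mathcal D} J_{xy}(t)\sigma_x\sigma_y$ is a Gaussian process whose instantaneous variance is of order $|\partial\mathcal D|$ and whose derivative in $t$ (via \eqref{eq:t}) has magnitude controlled by $\ee^{-t}$ and by the fresh increment $J'$. The probability that $X$ crosses zero in a short window $[0,t]$ scales, by a standard small-ball / first-passage estimate for Gaussians, like $t$ times the ratio of the typical drift to the standard deviation, i.e.\ of order $t\cdot|\partial\mathcal D|^{1/2}/|\partial\mathcal D|^{1/2}=t$ per droplet after normalization; summing over the $|\Lambda^*|=O(|\Lambda|)$ candidate edges/droplets gives $\E[N(t)]=O(t\,|\Lambda|)$. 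Combining with the per-flip edge count yields
\begin{equation}
\label{eqn: chaos rate bound}
\E\big[1-Q_\Lambda(\sigma^i,\sigma^i(t))\big]\;\leq\; \frac{C'\,\E[N(t)]\,|\Lambda|^\gamma}{|\Lambda^*|}\;\leq\; C''\,t\,|\Lambda|^\gamma .
\end{equation}
Taking $t\leq C|\Lambda|^{-\alpha}$ with $\alpha>2\gamma$ makes the right-hand side at most $C''\,C\,|\Lambda|^{\gamma-\alpha}$, which tends to $0$; a Markov inequality then converts this expectation bound into the high-probability statement $Q_\Lambda(\sigma^i,\sigma^i(t))>1-\e$ on a set $A_\e$ of probability exceeding $1-\e$, exactly as required by Definition~\ref{df: ADC}.

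I expect the main obstacle to be the flip-rate estimate, and in particular the fact that the droplets themselves depend on $J$ and therefore change along the interpolation, so one cannot treat $\partial\mathcal D$ as a fixed deterministic set. The honest argument must instead bound the expected number of zero-crossings of the ground-state energy landscape uniformly over all droplets that can ever become critical, which is where the uniform hypothesis \eqref{eqn: droplet ass} (valid for \emph{all} $b\in\Lambda^*$ simultaneously, with probability one) does the essential work: it caps the energy fluctuation of every candidate droplet by $C|\Lambda|^\gamma$, so no single crossing can flip more than $C|\Lambda|^\gamma$ bonds regardless of which droplet is responsible. The reason the threshold is $\alpha>2\gamma$ rather than $\alpha>\gamma$ is that the Gaussian first-passage probability contributes one factor of $|\Lambda|^\gamma$ (through the droplet's fluctuation scale relative to its drift) and the per-flip edge count contributes a second, so the two factors of $|\Lambda|^\gamma$ compound; making this quantitative matching of the two powers precise, while handling the $J$-dependence of the droplets, is the delicate part of the proof.
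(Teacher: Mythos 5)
Your heuristic picture (ground state changes only when a critical droplet's boundary energy crosses zero; each such event alters at most $C|\Lambda|^\gamma$ bonds) is the right one, but the quantitative core of your argument does not hold up, and the symptom is visible in your own write-up: your displayed bound $\E[1-Q_\Lambda(\sigma^i,\sigma^i(t))]\le C''t|\Lambda|^\gamma$ would give absence of disorder chaos at every scale $\alpha>\gamma$, not $\alpha>2\gamma$, and your closing attempt to reconcile this ("two factors of $|\Lambda|^\gamma$ compound") contradicts the bound you actually derived. The real source of the exponent $2\gamma$ is not a second factor of droplet size but the fact that the coupling perturbation in \eqref{eq:t} has magnitude $\sqrt{1-\ee^{-2t}}\sim\sqrt{2t}$, i.e.\ order $\sqrt{t}$, not $t$. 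Consequently the droplet boundary energy moves by at most order $\sqrt{t}\cdot\max_e(|J_e|\vee|J_e'|)\cdot|\partial\mathcal D_b|\lesssim \sqrt{t}\,\sqrt{\log|\Lambda|}\,|\Lambda|^\gamma$, and requiring this to be small forces $t\lesssim|\Lambda|^{-2\gamma}$. Your "first-passage rate $\approx t$ per droplet after normalization" estimate is therefore wrong on the time scale, and the normalization $|\partial\mathcal D|^{1/2}/|\partial\mathcal D|^{1/2}=1$ is also not available, precisely because of the obstacle you flag yourself: the droplet is a $J$-dependent set, so one cannot treat $X(t)$ as a fixed Gaussian process and must instead use the worst-case bound $\sqrt{t}\cdot\max|J|\cdot|\partial\mathcal D_b|$ uniformly over the (finitely many, by Lemma \ref{lem: number of planes}) droplets encountered along the interpolation. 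You identify this difficulty but do not resolve it.

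The paper's proof sidesteps the event-counting entirely and works edge by edge. For each edge $b$ it introduces the flexibility $F_b$, shows $F_b(0)=2|J_b-\mathcal C_b|$ with $\mathcal C_b$ independent of $J_b$, so that by Gaussian anti-concentration $F_b(0)>\delta$ on a $(1-\e)$ fraction of edges with high probability (Lemma \ref{lem: flex}); it then proves the deterministic Lipschitz-type estimate $|F_b(t)-F_b(0)|\le 6\sqrt{t}\,\max_e(|J_e|\vee|J_e'|)\,\max_{s\le t}|\partial\mathcal D_b(s)|$ by integrating the piecewise-constant gradient of $F_b$ across the hyperplane crossings (Proposition \ref{prop: flex diff}, using Lemma \ref{lem: continuity}); and finally invokes Proposition \ref{prop: criterion}, which says $F_b(s)>0$ for all $s\le t$ forces $\sigma_b(s)=\sigma_b(0)$. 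This yields directly that a $(1-\e)$ fraction of edges is unchanged for $t\lesssim|\Lambda|^{-\alpha}$ with $\alpha>2\gamma$, with no need to count flip events or to multiply an event count by a droplet size. To repair your argument you would need, at minimum, to replace the rate-$t$ first-passage estimate by the $\sqrt{t}$-perturbation bound, to handle the $J$-dependence of the droplets along the path (the finitely-many-crossings lemma is what makes the path integral of the flexibility well defined), and to redo the accounting so that you control, for each edge, the event that its bond value changes, rather than double-counting through $N(t)\cdot|\Lambda|^\gamma$.
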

\begin{rem}
\rm
Assumption \eqref{eqn: droplet ass} is a statement about the distribution of the size of the droplet. Indeed, we have by a union bound that
$$
\PP(\exists b\in \Lambda^*: |\partial \mathcal D_b|>a )\leq \sum_{b\in \Lambda^*} \PP(|\partial \mathcal D_b|>a )\ .
$$
Therefore, taking $a=a(\Lambda)$,  the assumption \eqref{eqn: droplet ass} would be satisfied if the tail distribution decays fast enough to ensure summability. 
\end{rem}
Together with Corollary \ref{cor: ADC-> variance}, this shows that non-trivial bounds on the variance of the difference of the ground state energies can be obtained by estimating the size of the critical droplets. Theorem \ref{thm: ADC} is probably far from optimal as it only gives non-trivial variance bounds for $\gamma<1/2$. It is easy to check that $\gamma=0$ at $d=1$, and one might expect that $\gamma=0$ also in $d=2$. More precise estimates combining the geometry of the droplets and their energy are needed to improve the result -- see Remark \ref{rem: estimate}. As a modest first step in this direction, we get that the variance is uniformly bounded away from zero.
\begin{cor}
\label{cor: variance one}
Let $\PP$ be as in Equation \eqref{eqn: prob} with $\PP(\mathcal I)>0$. Then one has for some constant $C>0$ independent of $\Lambda$, 
\begin{equation}
\label{eqn: var bound 2}
\var\Big(H_{\Lambda,\J}(\sigma^1)-H_{\Lambda,\J}(\sigma^2)\Big)\geq C \ .
\end{equation}
\end{cor}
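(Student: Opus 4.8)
The plan is to apply Theorem~\ref{thm: main} directly, with a horizon $t=t_\Lambda$ of order $|\Lambda|^{-1}$, rather than to pass through Definition~\ref{df: ADC} and Corollary~\ref{cor: ADC-> variance}. The reason is that \eqref{eqn: variance bound} involves the chaos term only \emph{in expectation}, whereas the high-probability formulation of absence of disorder chaos would force a union bound over the edges (as in the Remark following Theorem~\ref{thm: ADC}) and degrade the relevant exponent. Working in expectation instead, the prefactor $2|\Lambda^*|\sim|\Lambda|$ exactly compensates the length $\sim|\Lambda|^{-1}$ of the integration window, leaving a constant.

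First I would bound the incongruence term below, uniformly in $\Lambda$. Decomposing $\mathcal I=\bigcup_k\{\limsup_\Lambda Q_\Lambda(\sigma^1,\sigma^2)\le 1-1/k\}$ and then refining by the deterministic volume beyond which this $\limsup$ is attained, the hypothesis $\PP(\mathcal I)>0$ produces a fixed event of positive probability on which $1-Q_\Lambda(\sigma^1,\sigma^2)\ge 1/(2k)$ for all large $\Lambda$. Since $1-Q_\Lambda\ge 0$ identically, this yields $\E[1-Q_\Lambda(\sigma^1,\sigma^2)]\ge c_0>0$ for all large $\Lambda$, with $c_0$ independent of $\Lambda$.

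The heart of the matter is an unconditional short-time bound on the chaos term. Writing $1-Q_\Lambda(\sigma^i,\sigma^i(s))=\tfrac{2}{|\Lambda^*|}\,\#\{b:\ \sigma^i_x\sigma^i_y\text{ differs at times }0\text{ and }s\}$ and taking expectations, $\E[1-Q_\Lambda(\sigma^i,\sigma^i(s))]=\tfrac{2}{|\Lambda^*|}\sum_b\PP[b\text{ flips in }[0,s]]$. I would bound each flip probability by the square-root short-time behavior of the ground state under the Ornstein--Uhlenbeck perturbation \eqref{eq:t}, controlled by the critical droplet: $\PP[b\text{ flips in }[0,s]]\le C\sqrt{s\,|\partial\mathcal D_b|}$. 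Inserting the trivial deterministic bound $|\partial\mathcal D_b|\le|\Lambda^*|\le d|\Lambda|$ (the case $\gamma=1$ of \eqref{eqn: droplet ass}) gives $\E[1-Q_\Lambda(\sigma^i,\sigma^i(s))]\le C'\sqrt{|\Lambda|\,s}$ uniformly in $\Lambda$. Hence the integrand in \eqref{eqn: variance bound} is at least $c_0-4\sqrt{2C'}\,(|\Lambda|s)^{1/4}$, which exceeds $c_0/2$ as soon as $|\Lambda|s\le c_1$, for a constant $c_1$ depending only on $c_0$ and $C'$.

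Taking $t_\Lambda=c_1/|\Lambda|$ and using $1-\ee^{-t_\Lambda}\ge t_\Lambda/2$ for $\Lambda$ large, Theorem~\ref{thm: main} then gives
\[
\var\Big(H_{\Lambda,\J}(\sigma^1)-H_{\Lambda,\J}(\sigma^2)\Big)\ge 2|\Lambda^*|\int_0^{t_\Lambda}\tfrac{c_0}{2}\,\ee^{-s}\,\rd s=|\Lambda^*|\,c_0\big(1-\ee^{-t_\Lambda}\big)\ge\tfrac{c_0c_1}{2}\cdot\frac{|\Lambda^*|}{|\Lambda|}\ge C,
\]
with $C>0$ independent of $\Lambda$ (using $|\Lambda^*|\asymp|\Lambda|$), as claimed. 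The main obstacle is the flip estimate $\PP[b\text{ flips in }[0,s]]\le C\sqrt{s\,|\partial\mathcal D_b|}$: this is the point where one must exploit the genuine square-root short-time sensitivity of a Gaussian-perturbed ground state, and showing that the relevant coefficient is governed by $\sqrt{|\partial\mathcal D_b|}$ rather than by $|\partial\mathcal D_b|$ is precisely what makes the cruder droplet-size estimate behind Theorem~\ref{thm: ADC} (with $\gamma=1$) insufficient. Controlling it unconditionally amounts to ruling out an anomalous concentration of critical droplet energies near zero.
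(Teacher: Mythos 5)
Your overall architecture --- lower-bound the incongruence term by a constant via a Fatou-type argument, then choose a horizon $t_\Lambda\asymp|\Lambda|^{-1}$ so that the prefactor $2|\Lambda^*|$ compensates the window length --- is reasonable, and the first step is essentially the paper's Equation \eqref{eqn: fatou}. The problem is the step you yourself flag as ``the main obstacle'': the flip estimate $\PP[b\text{ flips in }[0,s]]\le C\sqrt{s\,|\partial\mathcal D_b|}$ is asserted, not proved, and nothing in the paper's toolkit delivers it. What Proposition~\ref{prop: flex diff} combined with the argument of Lemma~\ref{lem: flex} actually gives is
\[
\PP\big[b\text{ flips in }[0,s]\big]\le \PP\Big(F_b(0)\le 6\sqrt{s}\,\max_e(|J_e|\vee|J'_e|)\cdot\max_{u\le s}|\partial\mathcal D_b(u)|\Big)\lesssim \sqrt{s}\cdot\sqrt{\log|\Lambda|}\cdot|\partial\mathcal D_b|\,,
\]
i.e.\ the dependence on the droplet boundary is \emph{linear}, not square-root. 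With the trivial bound $|\partial\mathcal D_b|\le|\Lambda^*|$ this forces $s\lesssim|\Lambda|^{-2}$ (up to logarithms) before the chaos term becomes small, and then $2|\Lambda^*|\int_0^{t_\Lambda}\ee^{-s}\rd s\asymp|\Lambda|^{-1}\to0$: the prefactor no longer compensates the window. Upgrading $|\partial\mathcal D_b|$ to $\sqrt{|\partial\mathcal D_b|}$ would require controlling the correlations between the droplet geometry, the signs $\sigma_e$ on its boundary, and the Gaussian increments of the couplings --- exactly the ``delicate connection'' that Remark~\ref{rem: estimate} states the authors do not know how to exploit. So as written the proposal has a genuine gap at its central step.

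The paper's proof sidesteps this entirely. It first localizes the incongruence: Fatou's lemma gives a single edge $b$ with $\E[1-\sigma^1_b\sigma^2_b]>c$ uniformly in large $\Lambda$. It then runs the interpolation \emph{only on the coupling $J_b$}, leaving every other coupling in $\Lambda$ fixed. For that perturbation the sum in \eqref{eqn: decomp} reduces to the single term $e=b$, so the analogue of Proposition~\ref{prop: flex diff} reads $F_b(t)\ge F_b(0)-6\sqrt{t}\,(|J_b|\vee|J'_b|)$ with \emph{no} droplet-size factor at all, while $F_b(0)=2|J_b-\mathcal C_b|$ is bounded below with high probability because $J_b$ is independent of $\mathcal C_b$. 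Hence $\sigma^1_b(t)=\sigma^1_b(0)$ for all $t\le C$ with $C$ an absolute constant, and the single-edge analogue of Theorem~\ref{thm: main} (with $Q_\Lambda$ replaced by $\sigma^1_b\sigma^2_b$) yields the constant lower bound. The paper even remarks, immediately after the proof, that perturbing all couplings in $\Lambda$ ``would lead to a worse estimate of the flexibility'' --- which is precisely where your route gets stuck. To salvage your approach you would have to either prove the $\sqrt{s\,|\partial\mathcal D_b|}$ estimate (which is open and strictly stronger than anything established in the paper) or restrict the perturbation to a single edge as the paper does.
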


\subsection{Relations to Other Results}

A variance lower bound for the difference of ground state energies was proved in~\cite{ANSW16} under the assumption that the average (over the metastate) of the edge correlation function differs for $\sigma^1$ and $\sigma^2$. 
There the variance lower bound was obtained by an adaptation of the martingale approach of \cite{AW90}. 
The corresponding result at positive temperature was proved in \cite{ANSW14}. 
As in \cite{AW90}, the variance bound in \cite{ANSW14, ANSW16} is based on the elementary inequality
\begin{equation}
\label{eqn: var before}
\var\Big(H_{\Lambda,\J}(\sigma^1)-H_{\Lambda,\J}(\sigma^2)\Big)\geq \var\left(\E\big[H_{\Lambda,\J}(\sigma^1)-H_{\Lambda,\J}(\sigma^2)\ \big|J_\Lambda\big]\right)\ .
\end{equation}
A non-trivial variance lower bound can then be proved if there is an inherent asymmetry between $\sigma^1$ and $\sigma^2$ {\it on average}
over $\kappa^{(1)}_J\times\kappa^{(2)}_J$ and  over all couplings but the ones in $\Lambda$. 
For ferromagnetic models, such as random-field ferromagnets, this is not a problem as the plus and minus states retain such an asymmetry. 
In~\cite{ANSW14,ANSW16}, the needed asymmetry arose as a consequence of the assumption of the existence of incongruence in spin glasses. Of course, this assumption might not hold in general.

A novel approach used in the present paper is to obtain variance lower bounds by {\it conditioning on the disorder outside $\Lambda$}.
In effect, we use the asymmetry between incongruent states that always exists when the couplings $J_{\Lambda^c}$ outside $\Lambda$  are fixed, and the ground states $(\sigma^1, \sigma^2)$ (always assumed to be incongruent) for this choice of couplings outside $\Lambda$ are also fixed. One can then think of the ground states in $\Lambda$ for the {\it boundary condition} $\sigma^1$ as a function of $J_\Lambda$: $J_\Lambda\mapsto \sigma^1(J_\Lambda)$. 
By conditioning on $(J_{\Lambda^c},\sigma^1, \sigma^2)$ instead of $J_\Lambda$ as in \eqref{eqn: var before}, we get that the variance is bounded below by
\begin{equation}
\label{eqn: idea}
 \E\Big[\var\big(H_{\Lambda,\J}(\sigma^1)-H_{\Lambda,\J}(\sigma^2)\ \big |J_{\Lambda^c},\sigma^1, \sigma^2\big)\Big]\ .
 \end{equation}
It turns out that the couplings $J_\Lambda$ are independent of $(J_{\Lambda^c},\sigma^1, \sigma^2)$, and thus remain Gaussian, cf.~Lemma \ref{lem: metastate}.
Therefore, variance lower bounds can be obtained on $\var\big(H_{\Lambda,\J}(\sigma^1)-H_{\Lambda,\J}(\sigma^2)\ \big |J_{\Lambda^c},\sigma^1, \sigma^2\big)$ using Gaussian methods.\\

\medskip

When no magnetic field is present, disorder chaos in the mathematical literature often refers simply to the overlap $Q_\Lambda(\sigma^1, \sigma^1(t))$ being close to $0$ with large probability for some positive $t$; see e.g.~\cite{C14}. 
With this definition, absence of disorder chaos means that $Q_\Lambda(\sigma^1, \sigma^1(t))$ is bounded away from $0$ for $t$ small. 
For example, for the EA model, Chatterjee~\cite{C14} showed absence of disorder chaos in this sense by proving the bound \footnote{The bound is proved in finite volume for fixed boundary conditions, but also holds when the boundary conditions are sampled from a metastate.}
$$
\E[Q_{\Lambda}(\sigma^1, \sigma^1(t))]\geq C q\ee^{-t/(Cq)}
$$
for some constant $C>0$ and $q=1/(4d^2)$; see \cite{C14}. 
This bound is {\it a priori} too weak to get a good lower bound using Theorem \ref{thm: main}. 
This is because it does not preclude that $\sigma^1(t)$ has overlap strictly smaller than one with $\sigma^1(0)$, and thus severely differs from $\sigma^1(0)$ for very small $t$. 
Absence of disorder chaos (in the above sense) was also proved for some range of $t$ depending on the size of the system for $p$-spin spherical spin glasses by Subag in~\cite{Subag16}.

In the physics literature, where the concept arose, the definition of disorder chaos (or the closely related temperature chaos) is slightly more nuanced, in that both occur only beyond a lengthscale related to the size of the perturbation $t$~\cite{BM87,FH88,KB05}. Our Definition~\ref{df: ADC} is simply a formalized version of the standard physics definition, with an emphasis on the {\it scale\/} of disorder chaos represented by the parameter~$\alpha$.

The central result of this paper is the connection established through Theorem~\ref{thm: main} and Corollary~\ref{cor: ADC-> variance} between the scale of edge disorder chaos and the size of fluctuations in incongruent ground state energies, which in turn has a direct bearing on the possible presence or absence of incongruence in short-range spin glasses~\cite{ANSW14,ANSW16,Stein16}. A further, unanticipated relation is established in Theorem~\ref{thm: ADC}, in which the size of critical droplets is shown to set the scale of disorder chaos, creating a direct link between the stability of spin glass ground states (through the size of their critical droplets) and ground state multiplicity.

Finally, the results proved in this paper shed light on predictions made by the droplet theory of spin glasses~\cite{FH86,FH87,HF87,FH88}, based on scaling approaches~\cite{Mac84,BM85}, on the absence of incongruence in short-range spin glasses. This will be taken up in Sect.~\ref{sec:scaling}.

\subsection{Structure of the Paper}

The necessary background about measures on ground states is given in Section 2. 
In Section 3, we prove Theorem \ref{thm: main} based on standard Gaussian interpolation applied to the conditional variance \eqref{eqn: idea}.
The proofs of Theorem \ref{thm: ADC} and those of Corollaries \ref{cor: ADC-> variance} and Corollary \ref{cor: variance one} appear in Section 4. 
Finally, Section 5 discusses the connection between the approach developed here and the droplet theory of Fisher \& Huse.

\medskip

\noindent{\bf Acknowledgements.}
The authors thank the referee for many important suggestions that led to simplifications of some of the proofs.
The authors are also grateful to Nick Read for insightful remarks and for an important correction to Proposition 2.9, as well as Aernout van Enter and Jon Machta for useful comments on the manuscript. 
The research of LPA is supported in part by U.S.~NSF
Grant~DMS-1513441 and by U.S.~NSF CAREER~DMS-1653602.
The research of CMN was supported in part by U.S.~NSF Grants DMS-1207678 and DMS-1507019.
The research of DLS was supported in part by U.S.~NSF Grant DMS-1207678.

\section{Local Modification of Couplings}
\label{sect: metastate}

In this section, we develop the necessary framework to address the dependence of infinite-volume ground states on local modifications of couplings.
This theory of local excitations is based on a previous construction of the {\it excitation metastate}, see \cite{NS01,ADNS10, ANSW14, AD11}.
The main results are Propositions \ref{prop: criterion} and \ref{prop: flex diff} which together yield sufficient conditions, in terms of the size of the critical droplet of a given edge, for a ground state to remain the same at that edge under local modification of couplings.

Throughout this section and henceforth, we will sometimes use the notation $\sigma_e=\sigma_x\sigma_y$ for the spin interaction at the edge $e=\{x,y\}$.
We will also fix the finite box $\Lambda\subset \Z^d$, and sometimes omit its dependence in the notation.

\subsection{Measures on Ground States and Local Excitations}
We first construct a measure on the set of ground states $\mathcal G(J)$ in terms of finite-volume ones.
Consider a box $\mathcal B_n=[-n,n]^d$ on $\Z^d$ and the EA Hamiltonian on $\mathcal B_n$ with specified boundary condition $\xi$
\begin{equation}
\label{eqn: H_n}
H_{\mathcal B_n, J}(\eta)= \sum_{e\in \mathcal B_n^*}-J_e\eta_e + \sum_{\{x,y\}\in \partial  \mathcal B_n} -J_{xy} \eta_x \xi_y\ .
\end{equation}
The ground state for this Hamiltonian is the unique $\nu$-a.s.~minimizer over all $\eta\in \{-1,+1\}^{\mathcal B_n}$. 
Its restriction on $\Lambda$ can be determined using Definition \eqref{df: GS}. 
Equivalently, it can be determined using the difference of energies which extends more easily to infinite volume.
More precisely, the restriction of the ground state to $\Lambda$ is the unique $\nu$-a.s.~configuration 
$\eta\in\{-1,+1\}^\Lambda$ such that
\begin{equation}
\label{eqn: GS diff}
H_{\Lambda, J}(\eta)-H_{\Lambda, J}(\eta')+ E_n(\eta,\eta')<0 \ \ \forall \eta'\neq \eta\ ,
\end{equation}
where 
\begin{equation}
\label{eqn: E}
E_n(\eta,\eta')= \sum_{e\in \mathcal B_n^*\setminus \Lambda^*}-J_e(\sigma^\eta_e-\sigma^{\eta'}_e) + \sum_{\{x,y\}\in \partial  \mathcal B_n} -J_{xy} (\sigma_x^\eta-\sigma_y^{\eta'}) \xi_y\ .
\end{equation}
The variable $E_n(\eta,\eta')$ is the difference of energies {\it outside $\Lambda$} of the states $\sigma^\eta$, $\sigma^{\eta'}$ that minimizes $H_{\mathcal B_n, J}$ over the configurations equal to $\eta$ on $\Lambda$, and similarly for $\eta'$.
The advantage of this formulation is two-fold. First, as detailed below, the random variables $E_n(\eta,\eta')$, $n\geq 1$, as a function of $J$ are tight. Second, the random variables $\vec E_n=\big(E_n(\eta,\eta');\eta, \eta'\in \{-1,+1\}^\Lambda\big)$ are independent of $J_\Lambda$.
This is because the restriction to  fixed $\eta$ cancels out the dependence on $J_\Lambda$.
These two observations lead to:
\begin{lem}
\label{lem: metastate}
Fix $\Lambda\subset \Z^d$.
There exists a subsequence such that the joint distribution of $(J_,\vec E_n)$ converges weakly 
to a probability measure on $(J, \vec{E})$ where $\vec{E}=\big(E(\eta, \eta'); \eta,\eta'\in\{-1,1\}^{\Lambda}\big)$ with the properties:
\begin{itemize}
\item {\rm Boundedness}: For every $\eta,\eta'$
\begin{equation}
\label{eqn: bound}
E(\eta,\eta')\leq \sum_{e\in \partial \Lambda}2 |J_e|\ \ a.s.
\end{equation}
\item {\rm Linear relations}: $E(\eta,\eta)=0$ for every $\eta$, and for every $\eta, \eta',\eta''$,
\begin{equation}
\label{eqn: linear}
E(\eta, \eta'')=E(\eta, \eta')+ E(\eta', \eta'')\ \ a.s.
\end{equation}
\item {\rm Independence}: Write $J=(J_{\Lambda},J_{\Lambda^c})$ where $J_{\Lambda}=(J_e, e\in \Lambda^*)$. Then
the pair $(J_{\Lambda^c},\vec E)$ is independent of $J_\Lambda$.
\end{itemize}
\end{lem}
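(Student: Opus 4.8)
The plan is to establish all three properties first at finite $n$, where they hold as exact identities or sure inequalities, and then transport them through a subsequential weak limit. Fix a large box $\mathcal{B}_n\supset\Lambda$ and, for each boundary configuration $\eta\in\{-1,+1\}^\Lambda$, write $G_n(\eta)$ for the minimal ``outside'' energy, i.e. the minimum over $\zeta\in\{-1,+1\}^{\mathcal{B}_n}$ with $\zeta|_\Lambda=\eta$ of $\sum_{e\in\mathcal{B}_n^*\setminus\Lambda^*}-J_e\zeta_e+\sum_{\{x,y\}\in\partial\mathcal{B}_n}-J_{xy}\zeta_x\xi_y$. By the very definition of $E_n$ one has $E_n(\eta,\eta')=G_n(\eta)-G_n(\eta')$, which makes the linear relations $E_n(\eta,\eta)=0$ and $E_n(\eta,\eta'')=E_n(\eta,\eta')+E_n(\eta',\eta'')$ immediate. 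For boundedness I would take a minimizer $\zeta'$ realizing $G_n(\eta')$ and build a competitor for the $\eta$-problem by overwriting only its values on $\Lambda$ by $\eta$; since the two configurations agree off $\Lambda$, their outside energies differ only through the edges of $\partial\Lambda$, each contributing at most $2|J_e|$, so $G_n(\eta)\le G_n(\eta')+\sum_{e\in\partial\Lambda}2|J_e|$ and hence $E_n(\eta,\eta')\le\sum_{e\in\partial\Lambda}2|J_e|$ surely, with the same bound at every $n$.

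The key structural observation is that the outside energy involves no coupling in $\Lambda^*$: once the configuration on $\Lambda$ is frozen to $\eta$, the internal terms $\sum_{e\in\Lambda^*}-J_e\eta_e$ are constant and drop out of the minimization, so $G_n(\eta)$, and therefore every $E_n(\eta,\eta')$, is a measurable function of $J_{\Lambda^c}$ alone. Since $J_\Lambda$ and $J_{\Lambda^c}$ are disjoint families of i.i.d.\ Gaussians, this already gives the finite-$n$ independence: the pair $(J_{\Lambda^c},\vec E_n)$ is a function of $J_{\Lambda^c}$ and is thus independent of $J_\Lambda$.

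Tightness of $(J,\vec E_n)_n$ follows because the marginal law of $J$ is fixed and, applying the bound to both orderings, $|E_n(\eta,\eta')|\le\sum_{e\in\partial\Lambda}2|J_e|$, a fixed integrable random variable; as there are only finitely many coordinates I can extract one subsequence along which the whole vector $(J,\vec E_n)$ converges weakly to some $(J,\vec E)$ by Prokhorov. The boundedness inequality and the linear relations define closed sets (for the latter, the continuous map $(J,\vec E)\mapsto E(\eta,\eta'')-E(\eta,\eta')-E(\eta',\eta'')$ pushes each $\mu_n$ onto $\delta_0$), so they persist in the limit by the portmanteau theorem. For independence in the limit I would compare characteristic functions: the factorization $\E\big[e^{i\langle s,J_\Lambda\rangle}e^{i\langle t,(J_{\Lambda^c},\vec E_n)\rangle}\big]=\E\big[e^{i\langle s,J_\Lambda\rangle}\big]\,\E\big[e^{i\langle t,(J_{\Lambda^c},\vec E_n)\rangle}\big]$ holds for each $n$, and passing to the limit along the subsequence (the integrands are bounded and continuous) yields the same factorization for $(J_\Lambda,(J_{\Lambda^c},\vec E))$, which is exactly the asserted independence.

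The genuinely substantive point, and the one I would be most careful about, is the finite-$n$ independence: verifying that freezing the $\Lambda$-configuration really removes \emph{all} dependence of the minimizing outside configuration on $J_\Lambda$, so that $\vec E_n$ is a function of $J_{\Lambda^c}$ only. Everything else is either an exact finite-volume identity or a routine application of tightness together with the portmanteau and characteristic-function machinery used to carry these properties across the weak limit.
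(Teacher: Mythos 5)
Your proposal is correct and follows essentially the same route as the paper: establish the $2\sum_{e\in\partial\Lambda}|J_e|$ bound, the linear relations, and the independence from $J_\Lambda$ exactly at finite $n$ (the competitor you build — $\eta$ inside, the $\eta'$-minimizer outside — is the same one the paper uses), then extract a weakly convergent subsequence by tightness and carry the three properties across the limit. Your extra detail on why the properties persist (portmanteau for the closed constraint sets, characteristic functions for independence) fills in steps the paper leaves implicit but does not change the argument.
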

\begin{proof}
The tightness of the random variables $(\vec E_n, n\in \N)$ follows from the inequality
\begin{equation}
\label{eqn: bound n}
E_n(\eta,\eta')\leq \sum_{e\in \partial \Lambda}2 |J_e|\ .
\end{equation}
This is because
$$
H_{\mathcal B_n, J}(\sigma^\eta)-H_{\mathcal B_n, J}(\sigma^{\eta'})
\leq H_{\Lambda, J}(\eta)-H_{\Lambda, J}(\eta') +\sum_{e\in \partial \Lambda}2 |J_e|\ ,
$$
where the inequality is obtained by replacing $\sigma^\eta_x$ for $x\in \Lambda^c$ by $\sigma^{\eta'}_x$
in the difference of energies \eqref{eqn: GS diff} and \eqref{eqn: E},
which increases the energy by definition of $\sigma^\eta$.
The tightness of the pair $(J,\vec E_n)$ directly follows since the $J$'s are IID. 
Equation \eqref{eqn: bound} is also straightforward from Equation \eqref{eqn: bound n} at finite $n$.
The linear relations are satisfied for every $\vec E_n$ and therefore extend to the weak limits. 
The same holds for the independence with $J_\Lambda$.
\end{proof}

Since we are interested in incongruent states, only the values $\eta_e=\eta_x\eta_y$ on an edges $e$ matter.
With this in mind we consider the collection $\vec{E}=\big(E(\eta, \eta'); \eta,\eta'\big)$ as indexed by elements $\eta,\eta'\in \{-1,+1\}^{\Lambda^*}$ where $\eta_e=\eta_x\eta_y$.
Of course, if two spin configurations are equal up to a global spin flip, then they correspond to the same element in $\{-1,+1\}^{\Lambda^*}$.
We then choose as a representative the one with {\it smaller energy} $E$; that is, we pick $\eta$ if $E(\eta,\eta')<0$ and $\eta'$ if $E(\eta,\eta')>0$.
In the case where $E(\eta,\eta')=0$, which happens for example when periodic boundary conditions are considered, the $\eta$'s are simply identified.
\\

We write $\kappa_{J_{\Lambda^c}}(\rd \vec E)$ for the conditional distribution of $\vec{E}$ given $J$, highlighting the independence from $J_\Lambda$, constructed from Lemma \ref{lem: metastate}.
 The variable $\vec E$ retains the relevant information on the boundary condition to determine the ground state in the box $\Lambda$ (up to a global spin flip).
Given $\vec E$, the ground state  in $\Lambda$ can be determined uniquely as a function of $J_\Lambda$ as in \eqref{eqn: GS diff} among all configuration in $\{-1,1\}^{\Lambda^*}$, assuming there are no non-trivial degeneracies.
These degeneracies will occur, for a given $\vec E$ sampled from $\kappa_{J_{\Lambda^c}}$, on the {\it critical set} given by the union of hyperplanes
\begin{equation}
\label{eqn: C}
\mathcal C=\mathcal C(\vec E)=\bigcup_{\substack{\eta\neq \eta'}}\big\{J_\Lambda\in \R^{\Lambda^*}:\sum_{e\in \Lambda^*}J_e(\eta_e-\eta_e')=E(\eta, \eta')\big\}\ .
\end{equation}
The union is over distinct $\eta,\eta'\in \{-1,+1\}^{\Lambda^*}$.
We refer to each hyperplane defining the critical set as a {\it critical hyperplane}. 
We work out the details of the cases where $\Lambda$ contains one and two edges in Remark \ref{rem: examples} below.

On the complement of the critical sets, it is possible to order the spin configurations in $\Lambda$ (up to spin flips) in decreasing order of their energies. In particular, it is possible to determine the ground state.
\begin{prop}
\label{prop: ordering}
For a given $\vec E$ with the property \eqref{eqn: linear} and $J_\Lambda\in \R^{\Lambda^*}\setminus \mathcal C$, there is a well-defined ordering $\eta^{(1)}\prec\eta^{(2)}\prec\dots$ of the elements of $\{-1,+1\}^{\Lambda^*}$
given by
\begin{equation}
\label{eqn: diff}
\eta \prec \eta ' \Longleftrightarrow E(\eta,\eta')+H_{\Lambda,J}(\eta)-H_{\Lambda,J}(\eta')<0\ .
\end{equation}
\end{prop}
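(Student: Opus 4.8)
The plan is to show that $\prec$ defines a strict total order on $\{-1,+1\}^{\Lambda^*}$, which amounts to checking irreflexivity, antisymmetry together with totality (trichotomy), and transitivity. To organize this, I would introduce the single pairwise quantity
\[
F(\eta,\eta')=E(\eta,\eta')+H_{\Lambda,J}(\eta)-H_{\Lambda,J}(\eta'),
\]
so that by definition $\eta\prec\eta'\iff F(\eta,\eta')<0$. The central observation I would aim for is that $F$ is an additive, antisymmetric coboundary, i.e.\ that it can be written as $F(\eta,\eta')=\phi(\eta)-\phi(\eta')$ for a single real-valued potential $\phi$ on $\{-1,+1\}^{\Lambda^*}$. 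Once this is in hand, the ordering $\prec$ is simply the ordering of configurations by the real numbers $\phi(\eta)$, and all the order axioms follow for free.

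First I would record two algebraic identities coming from the linear relations \eqref{eqn: linear}. Taking $\eta''=\eta$ in \eqref{eqn: linear} and using $E(\eta,\eta)=0$ gives $E(\eta',\eta)=-E(\eta,\eta')$, whence $F(\eta',\eta)=-F(\eta,\eta')$; this is the antisymmetry of $F$. Next, writing out $F(\eta,\eta')+F(\eta',\eta'')$ and using linearity of $E$ together with the telescoping of the $H_{\Lambda,J}$ terms yields the cocycle identity $F(\eta,\eta'')=F(\eta,\eta')+F(\eta',\eta'')$. Fixing any reference configuration $\eta^{0}$ and setting $\phi(\eta)=F(\eta,\eta^{0})$, antisymmetry and the cocycle identity give exactly $F(\eta,\eta')=\phi(\eta)-\phi(\eta')$.

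It then remains to use the hypothesis $J_\Lambda\notin\mathcal C$ to guarantee non-degeneracy. Since $H_{\Lambda,J}(\eta)-H_{\Lambda,J}(\eta')=-\sum_{e\in\Lambda^*}J_e(\eta_e-\eta'_e)$, one has $F(\eta,\eta')=0$ precisely when $\sum_{e\in\Lambda^*}J_e(\eta_e-\eta'_e)=E(\eta,\eta')$, which is the critical hyperplane attached to the pair $(\eta,\eta')$ in the definition \eqref{eqn: C} of $\mathcal C$. Hence for $J_\Lambda$ off $\mathcal C$ we have $F(\eta,\eta')\neq0$, equivalently $\phi(\eta)\neq\phi(\eta')$, for all distinct $\eta,\eta'$; that is, $\phi$ is injective on the finite set $\{-1,+1\}^{\Lambda^*}$. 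The ordering induced by a strictly separating real-valued function is automatically a strict total order: irreflexivity is $\phi(\eta)\not<\phi(\eta)$, trichotomy and antisymmetry come from comparability of distinct reals, and transitivity from transitivity of $<$ on $\R$. Listing the configurations in increasing order of $\phi$ produces the claimed chain $\eta^{(1)}\prec\eta^{(2)}\prec\cdots$.

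I do not expect a genuine obstacle here; the only point requiring care is the bookkeeping that matches $F(\eta,\eta')=0$ to membership in the critical set, together with the tacit identification of spin configurations up to a global flip, which is already absorbed into working with edge variables $\eta\in\{-1,+1\}^{\Lambda^*}$. The conceptual heart is recognizing that the linear relations \eqref{eqn: linear} are exactly what make $F$ a coboundary, reducing the combinatorial verification of an order to the triviality of ordering real numbers.
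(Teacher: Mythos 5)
Your proof is correct and follows essentially the same route as the paper: the paper also fixes a reference configuration $\eta^0$ and orders configurations by the value $E(\eta,\eta^0)+H_{\Lambda,J}(\eta)-H_{\Lambda,J}(\eta^0)$ (your potential $\phi$), using the linear relations \eqref{eqn: linear} and $J_\Lambda\notin\mathcal C$ to rule out ties. Your presentation via the coboundary identity $F(\eta,\eta')=\phi(\eta)-\phi(\eta')$ is just a cleaner packaging of the paper's iterative extraction of successive minimizers.
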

The critical set corresponds to the value of $J_\Lambda$ for which $\eta^{(i)}=\eta^{(j)}$ for some pair $i\neq j$.
\begin{proof}
As a reference point, take $\eta^0\in$ with $\eta_e^0=+1$ for all $e\in \Lambda^*$. 
If $J_\Lambda \notin \mathcal C$, there exists a unique $\eta$ that minimizes the difference of energy
$$
E(\eta, \eta_0)+H_{\Lambda,J}(\eta)-H_{\Lambda,J}(\eta_0)\ .
$$
Indeed, if $\eta'\neq \eta$ was also a minimizer we would have by the linearity \eqref{eqn: linear} that
$E(\eta, \eta')+H_{\Lambda,J}(\eta)-H_{\Lambda,J}(\eta')=0$ contradicting the fact that $J_\Lambda$ is not in $\mathcal C$. 
Denote this unique minimizer by $\eta^{(1)}$. 
We define $\eta^{(2)}$ as the minimizer of the difference of energy $E(\eta, \eta^{(1)})+H_{\Lambda,J_\Lambda}(\eta)-H_{\Lambda,J_\Lambda}( \eta^{(1)})$ over $\eta$'s not equal to $\eta^{(1)}$. By construction this difference of energy is strictly positive.
Again $\eta^{(2)}$ is uniquely defined by linearity. The whole sequence $\eta^{(j)}$ is constructed this way until $\{-1,+1\}^{\Lambda^*}$ is exhausted.
The relation $\eta\prec \eta'$ is straightforward from construction. 
\end{proof}

The ordering introduced above defines three important maps from $\R^{\Lambda^*}\setminus \mathcal C$ to $\{-1,1\}^{\Lambda^*}$ which allow the study of excitations as a local function of the couplings.
The {\it ground state map} is the map
\begin{equation}
\label{eqn: gs map}
\begin{aligned}
\sigma(\cdot):\R^{\Lambda^*}\setminus \mathcal C&\to \{-1,+1\}^{\Lambda^*}\\
J_\Lambda&\mapsto \sigma(J_\Lambda)=\eta^{(1)}
\end{aligned}
\end{equation}
where $\sigma(J_\Lambda)$ is $\eta^{(1)}$ in the ordering at $J_\Lambda$ given by Proposition \ref{prop: ordering}. 
For a given edge $b\in\{-1,+1\}^{\Lambda^*}$, we define the {\it excitation map at the edge $b$} as
\begin{equation}
\label{eqn: pm map}
\begin{aligned}
\sigma^{+,b}(\cdot):\R^{\Lambda^*}\setminus \mathcal C&\to \{-1,+1\}^{\Lambda^*}\\
J_\Lambda&\mapsto \sigma^{+,b}(J_\Lambda)
\end{aligned}
\end{equation}
 where $\sigma^{+,b}(J_\Lambda)\prec \eta$ for all $\eta\neq \sigma^{+,b}(J_\Lambda)$ with $\eta_b=+1$.
 In words, $\sigma^{+,b}(J_\Lambda)$ is the configuration of smallest energy with the restriction that $\eta_b=+1$.
The map $\sigma^{-,b}(\cdot)$ is defined similarly, but restricting to $\eta$'s with $\eta_b=-1$.
Note that we evidently have $\sigma(J_\Lambda)= \sigma^{+,b}(J_\Lambda)$ or  $\sigma(J_\Lambda)=\sigma^{-,b}(J_\Lambda)$.\\

The precise definition of $\kappa_J(\rd \sigma)$ appearing in Equation \eqref{eqn: prob} can now be given.
We use the same notation for both the measure on $\vec E$ and $\sigma$ as they are directly related.
\begin{df}
\label{df: GS prob}
The probability measure $\kappa_J(\rd \sigma)$ on infinite-volume ground states restricted to $\Lambda$ 
is the distribution of $\sigma(J_\Lambda)$ as defined in \eqref{eqn: gs map} under $\kappa_{J_{\Lambda^c}}(\rd \vec E)$. 
\end{df}

\begin{rem}
{\rm 
It is not hard to check that the definition of $\kappa_J(\rd \sigma)$ is equivalent to taking weak limits of the distribution of the ground states (up to a spin flip)
of $H_{\mathcal B_n, J}$ given in \eqref{eqn: H_n} as a probability measures on $\{-1,+1\}^{\Lambda^*}$. 
The construction in Lemma \ref{lem: metastate} has the disadvantage of having the dependence on $J_\Lambda$ implicit in $\sigma$, which makes impossible to study the local modification of the couplings.
The advantage of working with $\vec E$ is that the dependence on $J_\Lambda$ appears solely in the Hamiltonian in $\Lambda$ as in \eqref{eqn: diff}.
This property is sometimes referred to as {\it coupling covariance}, see e.g.~\cite{ANSW16}.
}
\end{rem}

\begin{rem}
\label{rem: examples}
{\rm 
The simplest cases of excitation metastates where $\Lambda$ contains one and two edges were worked out in \cite{NS01} and \cite{ADNS10} respectively.
We briefly recall these examples here to illustrate the general theory.

{\it Case of one edge}. Consider $\Lambda=\{x,y\}$ where $x,y$ are nearest-neighbor vertices with $b=\{x,y\}$. 
We have that $\eta_b=+1$ or $-1$. The collection $\vec E$ of energies has  four values $E(+,-), 0,0$ and $E(-,+)=-E(+,-)$. 
The critical set is defined by a single equation:
$$
2J_b=E(+,-)\ ,
$$
and consists of the {\it critical value} $\mathcal C_b=E(+,-)/2$. Note that $\mathcal C_b$ is independent of $J_b$ by Lemma \ref{lem: metastate}.
The ground state $\sigma(J_b)$ at the edge $b$ is $+1$ for $J_b>\mathcal C_b$ and $-1$ for $J_b<\mathcal C_b$. 
The flexibility of the edge $b$, defined in \eqref{eqn: flex} below, is the function $F_b(J_b)$ giving the energy difference between $\sigma^{+,b}$ and $\sigma^{-,b}$ in absolute value. Here it
is simply
$$
F_b(J_b)=|2J_b-E(+,-)|=2|J_b-\mathcal C_b|\ .
$$

\medskip

{\it Case of two edges}. Take $\Lambda=\{x,y,w,z\}$ with edges $b=\{x,y\}$ and $e=\{w,z\}$. In this case, the configuration $\eta$ takes value in $\{++;+-;-+;--\}$ 
where we write the configuration at $b$ first and at $e$ second. The critical set is defined by six equations
\begin{equation}
\label{eqn: ex two}
\begin{aligned}
2J_b&=E(++;-+) \qquad  &2J_b=E(+-;--) \\
2J_e&=E(++;+-) \qquad  &2J_e=E(-+,--) \\
2(J_b+J_e)&=E(++,--) \qquad  &2(J_b-J_e)=E(+-;-+)\ .
\end{aligned}
\end{equation}
There are three possible scenarios: $E(++,-+)>E(+-;--)$, $E({\tiny ++},-+)=E(+-;--)$, $E(++,-+)<E(+-;--)$.
We look at the first case. It is depicted in Figure \ref{fig: flex} in the $(J_b,J_e)$-plane. 
Note that by linearity \eqref{eqn: linear} the inequality implies also $E(++;+-)>E(-+;--)$ by adding $E(-+,+-)$ on both sides. 
The equations \eqref{eqn: ex two} define sixteen regions where the ordering of the $\eta$'s (in terms of the energy differences) is non-degenerate.
(Not all twenty-four orderings of the four states are possible, since some are precluded by the energies.)

We now focus on the degeneracy of the ground state.
This happens at points $(J_b,J_e)$ where the energy difference between $\sigma^{+,b}(J_b,J_e)$ and $\sigma^{-,b}(J_b,J_e)$, or between $\sigma^{+,e}(J_b,J_e)$ and $\sigma^{-,e}(J_b,J_e)$, is zero. We treat the first case.
The state $\sigma^{+,b}(J_b,J_e)$ can be either $(++)$ or $(+-)$, and $\sigma^{-,b}(J_b,J_e)$ can be either $(-+)$ or $(--)$.
The energy difference between each is
$$
E(++;+-)-2J_e\qquad E(-+;--)-2J_e\ .
$$
Both are negative for $J_e$ large enough, showing that we must have $\sigma^{+,b}(J_b,J_e)=(++)$  and $\sigma^{-,b}(J_b,J_e)=(-+)$.
The same way we have $\sigma^{+,b}(J_b,J_e)=(+-)$ and $\sigma^{-,b}(J_b,J_e)=(--)$ for $J_e$ small enough. 
We conclude that the ground state degeneracy occurs at $J_b=E(++;-+)/2$ for $J_e$ large enough, and at $J_b=E(+-;--)/2$ for $J_e$ small enough. There is also a middle region where the degeneracy occurs between $(+-)$ and $(-+)$ at $J_b=J_e+E(+-;-+)/2$. 
\begin{figure}[h]
\includegraphics[height=7cm]{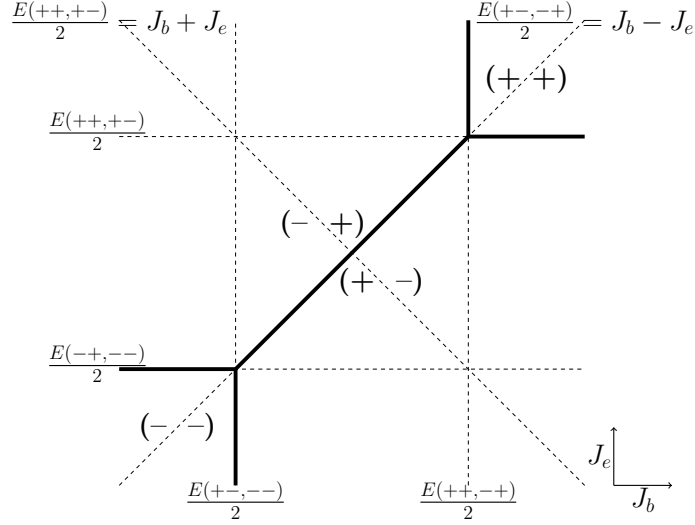}
\caption{An illustration of the critical set for two edges $b$ and $e$ in the $(J_b,J_e)$-plane. The dotted lines are the lines where the energy difference between two states is zero. The bold lines represent a degeneracy of the ground state. 
They delimit four regions where the ground state  is non-degenerate.
}
\label{fig: flex}
\end{figure}
}
\end{rem}

\subsection{Critical Droplets and Flexibilities}
Now that we can control the ground state as a function of $J_\Lambda$, we can study how the ground state at given edge $b$ depends on the couplings in $\Lambda$. The ground state configuration at $b$ is $+1$ if $\sigma^{+,b}(J_\Lambda)$ is the ground state and $-1$ if $\sigma^{-,b}(J_\Lambda)$ is the ground state.
The difference of energies between the two determines the correct value. Changes in the ground state occur when this energy difference is zero.
With this in mind, we consider the absolute value of the difference of energies or {\it  flexibility of the edge $b$} introduced in \cite{NS2000}:
\begin{equation}
\label{eqn: flex}
\begin{aligned}
F_b(J_\Lambda)&=\left|-\sum_{e\in \Lambda^*} J_e \big(\sigma^{+,b}_e(J_\Lambda)-\sigma^{-,b}_e(J_\Lambda)\big)+ E(\sigma^{+,b}(J_\Lambda), \sigma^{-,b}(J_\Lambda))\right|
\ .
\end{aligned}
\end{equation}
The flexibility $F_b$ is a map that measures the sensitivity of the ground state at the edge $b$ as a function of the couplings,
as highlighted in Proposition \ref{prop: criterion}.
The terms in the first sum are only non-zero on the edges of the {\it boundary of the critical droplet of the edge $b$ in $\Lambda$} at $J_\Lambda\in \R^{\Lambda^*}\setminus \mathcal C$, defined to be the set
\begin{equation}
\label{eqn: droplet}
\partial\mathcal D_b(J_\Lambda)=\{e\in \Lambda^*: \sigma_e^{+,b}(J_\Lambda)\neq \sigma_e^{-,b}(J_\Lambda)\}\ .
\end{equation}
\begin{figure}
\includegraphics[height=6cm]{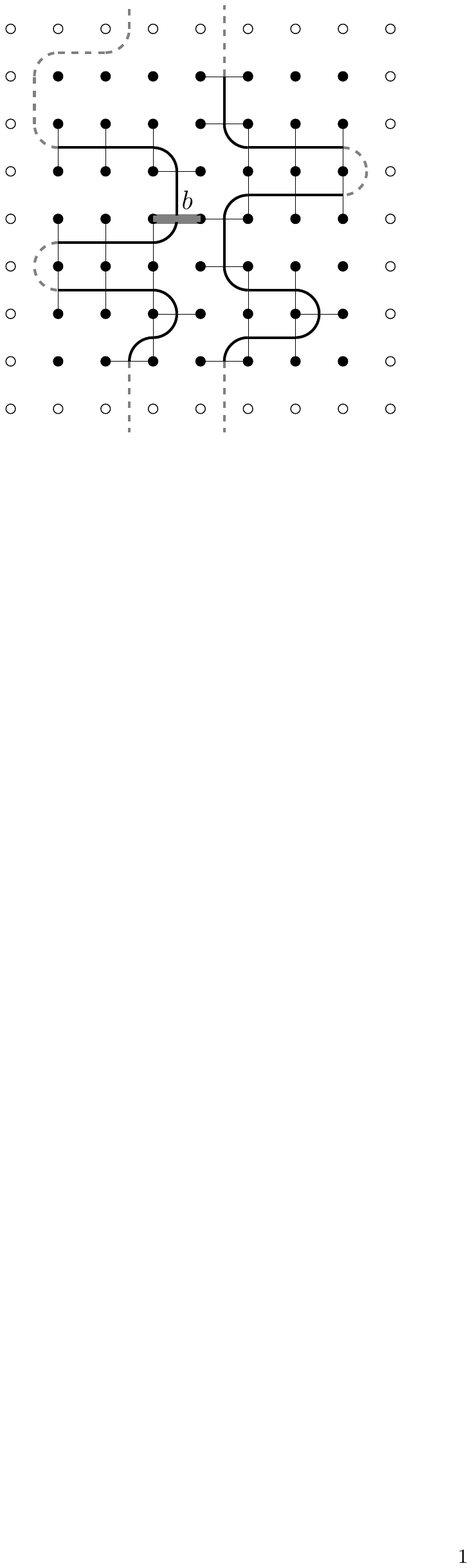}
\caption{An illustration of the critical droplet of an edge $b$ (in gray) and its boundary in $\Lambda$. The vertices in the box $\Lambda$ are black. The edges in $\partial\mathcal D_b(J_\Lambda)$ are the ones in $\Lambda^*$ that cross the boundary of the droplet.}
\label{fig: droplet}
\end{figure}

The following lemma is important to control the stability of the ground states as couplings are modified.
It shows that the flexibility uniquely extends to a continuous map on $\R^{\Lambda^*}$. 
\begin{lem}
\label{lem: continuity}
For every edge $b\in \Lambda^*$,
the map $J_\Lambda\mapsto F_b(J_\Lambda)$ on $\mathcal \R^{\Lambda^*}\setminus \mathcal C$ is a piecewise affine function with 
\begin{equation}
\frac{\partial F_b}{\partial J_e} (J_\Lambda)= 
\begin{cases}
2\sigma_e(J_\Lambda) & \text{ if $e\in \partial\mathcal D_b(J_\Lambda)$}\\
0 & \text{ otherwise.}
\end{cases}
\end{equation}
Furthermore, the map extends uniquely to a continuous function on $\R^{\Lambda^*}$.
\end{lem}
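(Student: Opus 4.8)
The plan is to recognize $F_b$ as the absolute value of a difference of two concave, piecewise-affine functions of $J_\Lambda$, and then to read off both the gradient formula and the continuous extension from that structure; the only genuinely delicate point is the sign bookkeeping needed to recover $2\sigma_e(J_\Lambda)$ exactly. First I would fix a reference configuration $\eta^0\in\{-1,+1\}^{\Lambda^*}$ (say $\eta^0_e=+1$ for all $e$) and introduce the total energy $\mathcal E(\eta)=H_{\Lambda,J}(\eta)+E(\eta,\eta^0)$. By the linearity \eqref{eqn: linear}, the differences $\mathcal E(\eta)-\mathcal E(\eta')=H_{\Lambda,J}(\eta)-H_{\Lambda,J}(\eta')+E(\eta,\eta')$ coincide with the ordering quantity of Proposition \ref{prop: ordering} and are independent of the choice of $\eta^0$. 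Since $E$ does not depend on $J_\Lambda$ by Lemma \ref{lem: metastate}, each $\mathcal E(\eta)=-\sum_e J_e\eta_e+E(\eta,\eta^0)$ is affine in $J_\Lambda$. Writing $\Phi_b(J_\Lambda)=\mathcal E(\sigma^{+,b}(J_\Lambda))-\mathcal E(\sigma^{-,b}(J_\Lambda))$, a direct expansion identifies $\Phi_b$ with the signed expression inside the absolute value of \eqref{eqn: flex}, so that $F_b=|\Phi_b|$. I would also record the partial minima $G_{\pm}(J_\Lambda)=\min_{\eta:\,\eta_b=\pm1}\mathcal E(\eta)$, minima of finitely many affine functions, with $\sigma^{\pm,b}$ the respective minimizers and $\Phi_b=G_+-G_-$.

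Next I would compute the gradient on $\R^{\Lambda^*}\setminus\mathcal C$. There the ordering of Proposition \ref{prop: ordering} is strict, so each $\sigma^{\pm,b}$ is the unique minimizer of $\mathcal E$ over its half of the cube; because a strict minimum among finitely many affine functions persists under small perturbations of $J_\Lambda$, both minimizers are locally constant. Hence near such a point $\Phi_b$ agrees with the single affine function $-\sum_e J_e(\sigma^{+,b}_e-\sigma^{-,b}_e)+E(\sigma^{+,b},\sigma^{-,b})$, which proves the piecewise-affine claim and gives $\partial\Phi_b/\partial J_e=-(\sigma^{+,b}_e-\sigma^{-,b}_e)$. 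This vanishes unless $\sigma^{+,b}_e\neq\sigma^{-,b}_e$, i.e.\ unless $e\in\partial\mathcal D_b$, cf.~\eqref{eqn: droplet}. Since $\sigma^{+,b}$ and $\sigma^{-,b}$ differ at $b$ and, off $\mathcal C$, carry distinct energies, $\Phi_b\neq0$ there; thus $F_b=|\Phi_b|$ is differentiable with $\partial F_b/\partial J_e=-\,\mathrm{sgn}(\Phi_b)\,(\sigma^{+,b}_e-\sigma^{-,b}_e)$.

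The hard part, such as it is, is matching this sign to obtain $2\sigma_e(J_\Lambda)$ rather than its negative. I would split into the two cases according to which restricted minimizer is the true ground state. If $\Phi_b<0$ then $\sigma(J_\Lambda)=\sigma^{+,b}$, and for $e\in\partial\mathcal D_b$ one has $\sigma^{+,b}_e-\sigma^{-,b}_e=2\sigma^{+,b}_e=2\sigma_e$, whence $\partial F_b/\partial J_e=-(-1)(2\sigma_e)=2\sigma_e$. If $\Phi_b>0$ then $\sigma(J_\Lambda)=\sigma^{-,b}$ and $\sigma^{+,b}_e-\sigma^{-,b}_e=-2\sigma^{-,b}_e=-2\sigma_e$, giving again $\partial F_b/\partial J_e=-(+1)(-2\sigma_e)=2\sigma_e$; off $\partial\mathcal D_b$ the derivative is $0$ in both cases. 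Finally, for the continuous extension I would note that $G_+$ and $G_-$, being minima of finitely many affine functions, are continuous on all of $\R^{\Lambda^*}$, so $\Phi_b=G_+-G_-$ and hence $F_b=|\Phi_b|$ are too. Because $\mathcal C$ is a finite union of hyperplanes (see \eqref{eqn: C}), its complement is dense, so this continuous function is the unique continuous extension of $F_b$ from $\R^{\Lambda^*}\setminus\mathcal C$. Every step beyond the sign analysis reduces to the elementary fact that a minimum of finitely many affine functions is concave, piecewise affine, and continuous.
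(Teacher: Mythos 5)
Your proof is correct, and the gradient computation on $\R^{\Lambda^*}\setminus\mathcal C$ is essentially the paper's: locally constant minimizers off the critical set, direct differentiation of the resulting affine expression, and the same two-case sign check (the paper just writes ``WLOG $\sigma=\sigma^{+,b}$'' where you track $\mathrm{sgn}(\Phi_b)$ explicitly; the outcomes agree). Where you genuinely diverge is the continuous extension. The paper works directly on $\mathcal C$: it isolates the points where $\sigma^{+,b}$ or $\sigma^{-,b}$ becomes degenerate, and uses the linearity relations \eqref{eqn: linear} together with the hyperplane equations \eqref{eqn: equality}--\eqref{eqn: equiv} to show that the expression \eqref{eqn: flex} is independent of the choice of representative minimizer, hence well defined and continuous on each hyperplane and on their intersections. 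You instead introduce the restricted minima $G_{\pm}(J_\Lambda)=\min_{\eta:\,\eta_b=\pm1}\mathcal E(\eta)$, observe that $F_b=|G_+-G_-|$ off $\mathcal C$, and get global continuity for free because a minimum of finitely many affine functions is continuous (and piecewise affine) on all of $\R^{\Lambda^*}$; uniqueness then follows from density of $\R^{\Lambda^*}\setminus\mathcal C$. This is cleaner and more robust: it packages the paper's representative-independence computation into the single observation that $G_\pm$ do not depend on which minimizer achieves them, at the cost of introducing the auxiliary functions $\mathcal E$ and $G_\pm$ and of checking (as you do) that the signed quantity $G_+-G_-$ really is the expression inside the absolute value in \eqref{eqn: flex}. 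Both arguments are complete; yours arguably makes the piecewise-affine structure of $F_b$ more transparent.
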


\begin{proof}
Consider $J_\Lambda \in \R^{\Lambda^*}\setminus \mathcal C$. By the definition of $\sigma(J_\Lambda)$ and the fact that $\R^{\Lambda^*}\setminus \mathcal C$ is open, we have that the map $\sigma(\cdot)$ is constant in a neighborhood $V=V(J_\Lambda)$ of $J_\Lambda$, and so are $\sigma^{+,b}(\cdot)$ and $\sigma^{+,b}(\cdot)$. Write $\sigma$, $\sigma^{+}$, $\sigma^-$ for the respective values in $V$. Suppose without loss of generality, that $\sigma=\sigma^{+}$. The critical droplet boundary $\partial\mathcal D_b$ is also constant on $V$.
 The flexibility of the edge $b$ on $V$ takes the form
$$
F_b(y)=\sum_{e\in \partial\mathcal D_b} 2y_e\sigma_e - E(\sigma^{+}, \sigma^{-})\ , \  y \in V\ .
$$
Therefore, the derivative in the $y_e$-direction equals $2\sigma_e$ if $e\in \partial\mathcal D_b$ and is $0$ otherwise as claimed.
The fact that $F_b$ is a piecewise affine function on $\mathcal \R^{\Lambda^*}\setminus \mathcal C$ follows from the form of the derivatives and the fact that $\sigma$ is piecewise constant.

It remains to prove the extension to a unique continuous function.
Take $J_\Lambda\in \mathcal C$. By the same reasoning as above, the function $F_b$ is well-defined and continuous at $J_\Lambda$  unless 
 there are degeneracies in the definition of $\sigma^{+,b}(J_\Lambda)$ or $\sigma^{-,b}(J_\Lambda)$.
This happens if there are more than one minimizer for the difference of energy \eqref{eqn: diff} among the configurations with $+1$ at the edge $b$, and the ones with $-1$ at the edge $b$. 

Suppose there is exactly one degeneracy for the minimizer $\sigma^{+,b}$ at $J_\Lambda$.
This means that at $J_\Lambda$ there are configurations $\eta^+$ and $\tilde \eta^+$ such that
\begin{equation}
\label{eqn: equality}
\sum_{e\in \Lambda^*}J_e(\eta^+_e-  \tilde\eta^+_e) =E(\eta^+, \tilde \eta^+) \ .
\end{equation}
In particular, this means that $J_\Lambda$ sits on the hyperplane defined by $\eta^+,\tilde \eta^+$. 
Note that on this hyperplane the expressions for the flexibility in \eqref{eqn: flex} for $\eta^+$ and $\tilde \eta^+$ agree since by \eqref{eqn: linear} and by \eqref{eqn: equality}
\begin{equation}
\label{eqn: flex equal}
\begin{aligned}
&-\sum_{e\in \Lambda^*}J_e(\eta^+_e-  \eta^-_e) +E(\eta^+, \eta^-) \\
&=-\sum_{e\in \Lambda^*}J_e(\tilde\eta^+_e-  \eta^-_e) +E(\tilde\eta^+, \eta^-) 
-\sum_{e\in \Lambda^*}J_e(\eta^+_e-  \tilde\eta^+_e) +E(\eta^+, \tilde \eta^+)\\
&=-\sum_{e\in \Lambda^*}J_e(\tilde\eta^+_e-  \eta^-_e) +E(\tilde\eta^+, \eta^-)\ .
\end{aligned}
\end{equation}
This implies that the choice of representative for $\sigma^{+,b}$ on the hyperplane is irrelevant as far as the flexibility is concerned.
Therefore the flexibility extends continuously on the hyperplane.

Now suppose that there is more than one degeneracy for $\sigma^{+,b}$ or for $\sigma^{-,b}$ at $J_\Lambda$.
Without loss of generality suppose that there are $m$ configurations $\eta^1,\dots, \eta^m$ with $+1$ at the edge $b$ with the same energy difference. 
(The reasoning for degeneracies for the $-1$ excitation is the same.) 
Then by definition this is the same as having the relations
\begin{equation}
\label{eqn: equiv}
\sum_{e\in \Lambda^*}J_e(\eta^i_e-  \eta^j_e) =E(\eta^i,  \eta^j) \qquad \text{ for all $i,j\leq m$.}
\end{equation}
In other words, $J_\Lambda$ lies at the intersection of the hyperplanes defined by the $\eta^i$'s.
On each hyperplane, the flexibility is well-defined and continuous as shown above. 
Moreover, the same reasoning as in \eqref{eqn: flex equal} shows that these definitions must agree on the intersection by the relations \eqref{eqn: equiv}.
This concludes the proof of the lemma.
\end{proof}

We now study the stability of ground states as couplings in $\Lambda$ are varied.
For this, we fix $J_\Lambda, J_{\Lambda}'\in \R^{\Lambda^*}$ and consider the curve given by the non-linear interpolation
\begin{equation}
\label{eqn: line}
J_{\Lambda}(t)=\ee^{-t}J_\Lambda+\sqrt{1-\ee^{-2t}}J'_\Lambda,\ \  t\geq0\ .
\end{equation}

\begin{lem}
\label{lem: number of planes}
Consider the curve $J_\Lambda(t)$, $t\geq 0$ defined in \eqref{eqn: line}.
The number of $t$'s such that $J_\Lambda(t)$ is in the critical set is smaller than $4^{|\Lambda^*|}$.
\end{lem}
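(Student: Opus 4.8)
The plan is to bound the number of intersections of the curve $t\mapsto J_\Lambda(t)$ with each \emph{single} critical hyperplane by two, and then sum over the hyperplanes. Recall from \eqref{eqn: C} that $\mathcal C$ is a union of hyperplanes indexed by distinct pairs $\eta,\eta'\in\{-1,+1\}^{\Lambda^*}$. By the linearity \eqref{eqn: linear} (with $\eta''=\eta$, so that $E(\eta',\eta)=-E(\eta,\eta')$) the pairs $(\eta,\eta')$ and $(\eta',\eta)$ define the same hyperplane; since there are at most $2^{|\Lambda^*|}$ edge-configurations, there are at most $\binom{2^{|\Lambda^*|}}{2}$ distinct hyperplanes. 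It thus suffices to establish the per-hyperplane bound of two.

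First I would reduce the defining equation to a sinusoidal one. Fix $\eta\neq\eta'$, write $v_e=\eta_e-\eta'_e$ and $c=E(\eta,\eta')$, and set $a=\sum_{e\in\Lambda^*}J_e v_e$, $b=\sum_{e\in\Lambda^*}J'_e v_e$. Substituting \eqref{eqn: line} into $\sum_{e}J_e(t)v_e=c$, the membership $J_\Lambda(t)\in\mathcal C$ restricted to this hyperplane becomes
\begin{equation}
a\,\ee^{-t}+b\,\sqrt{1-\ee^{-2t}}=c\ .
\end{equation}
The crucial observation is that the nonlinear interpolation \eqref{eqn: line} traces an arc of a circle: setting $\theta=\arccos(\ee^{-t})$ gives a strictly increasing bijection from $[0,\infty)$ onto $[0,\pi/2)$ with $\ee^{-t}=\cos\theta$ and $\sqrt{1-\ee^{-2t}}=\sin\theta$, so the equation reads $a\cos\theta+b\sin\theta=c$, i.e. $R\cos(\theta-\phi)=c$ with $R=\sqrt{a^2+b^2}$.

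Next I would count solutions of this sinusoid on the interval $[0,\pi/2)$. When $R>0$ the derivative $-R\sin(\theta-\phi)$ has consecutive zeros spaced $\pi$ apart, hence at most one zero on an interval of length $\pi/2<\pi$; so the function is monotone or has a single turning point there, and is crossed by the level $c$ at most twice. Summing over the at most $\binom{2^{|\Lambda^*|}}{2}$ hyperplanes (double counting of a shared $t$ only helps) yields at most $2\binom{2^{|\Lambda^*|}}{2}=4^{|\Lambda^*|}-2^{|\Lambda^*|}<4^{|\Lambda^*|}$ values of $t$, as claimed.

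The only delicate point, and the main obstacle, is the circle/sinusoid reduction together with the degenerate case $R=0$, i.e. $a=b=0$, where the equation degenerates to $0=c$ and the curve either misses the hyperplane or lies entirely inside it (a continuum of $t$'s). Since $v\neq0$ whenever $\eta\neq\eta'$, each of $\{a=0\}$ and $\{b=0\}$ is a proper hyperplane in $(J_\Lambda,J'_\Lambda)$-space; as $J_\Lambda,J'_\Lambda$ are Gaussian and hence have a density, a union bound over the finitely many pairs shows $R>0$ for all hyperplanes with probability one. Thus the bound holds almost surely, while the counting itself is routine.
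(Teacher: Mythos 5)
Your proof follows essentially the same route as the paper: reduce each hyperplane crossing to the equation $a\ee^{-t}+b\sqrt{1-\ee^{-2t}}=c$, bound its solutions by two, and multiply by the number of hyperplanes. The only difference is that you supply details the paper leaves implicit --- the trigonometric substitution justifying the two-solution bound and the almost-sure exclusion of the degenerate case $a=b=0$ --- both of which are correct.
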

\begin{proof}
A given critical hyperplane is determined by a point $y=(y_e,e\in\Lambda^*)$ on the hyperplane and a vector $v=(v_e,e\in\Lambda^*)$ orthogonal to it.
If $J_\Lambda(t)$ intersects the hyperplane at $t$, then $t$ must satisfy the equation
$$
\sum_{e\in\Lambda^*}v_eJ_e(t)=v_ey_e\ .
$$
By writing the expression for $J_e(t)$, this yields an equation of the following form for $t$:
$$
a\ee^{-t}+b\sqrt{1-\ee^{-2t}}=c\ ,
$$
where $a,b,c$ depend on $J_\Lambda, J_\Lambda', v, y$. This equation has at most two solutions.
Since there are at most $2^{\Lambda^*}\cdot 2^{\Lambda^*-1}$ hyperplanes, we obtain the claimed bound.
\end{proof}

For given endpoints $(J_\Lambda, J_\Lambda')$ for the curve \eqref{eqn: line}, we write $F_b(t)=F_b(J_\Lambda(t))$, $\sigma^{\pm, b}(J_\Lambda(t))=\sigma^{\pm, b}(t)$, and $\sigma(J_\Lambda(t))=\sigma(t)$
for simplicity. 
The following result gives a criterion for the stability of the ground state at an edge in terms of its flexibility.
In short, the ground state remains the same as the couplings in $\Lambda$ are varied as long as the flexibility is not $0$. 
\begin{prop}
\label{prop: criterion}
Consider $b\in \Lambda^*$ and the curve $t\mapsto J_\Lambda(t)$ defined in \eqref{eqn: line}. For $\nu$-almost all $(J_{\Lambda}, J_\Lambda')$, we have the following implication:
$$
\text{ if $F_b(s)> 0\ \forall 0\leq s\leq t$, then $\sigma_b(s)=\sigma_b(0),\ \forall 0\leq s\leq t\ .$}
$$
\end{prop}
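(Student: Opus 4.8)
The plan is to reduce the claim to a one-dimensional sign-and-continuity argument for a single scalar function along the curve \eqref{eqn: line}, namely the \emph{signed} flexibility. Writing $g_{\eta,\eta'}(t) := E(\eta,\eta') + H_{\Lambda, J_\Lambda(t)}(\eta) - H_{\Lambda, J_\Lambda(t)}(\eta') = E(\eta,\eta') - \sum_{e\in\Lambda^*} J_e(t)\,(\eta_e - \eta'_e)$, the ordering \eqref{eqn: diff} reads $\eta \prec \eta'$ at time $t$ iff $g_{\eta,\eta'}(t) < 0$, and $\sigma_b(t) = +1$ exactly when $\sigma^{+,b}(t)$ is the ground state, i.e. when $g_{\sigma^{+,b}(t),\,\sigma^{-,b}(t)}(t) < 0$. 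I set $\Delta_b(t) := g_{\sigma^{+,b}(t),\,\sigma^{-,b}(t)}(t)$; comparison with \eqref{eqn: flex} gives $F_b(t) = |\Delta_b(t)|$, while by construction $\sigma_b(t) = +1 \iff \Delta_b(t) < 0$ and $\sigma_b(t) = -1 \iff \Delta_b(t) > 0$.

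The key step is to prove that $\Delta_b$, and not merely its modulus $F_b$, is continuous in $t$. I would fix a reference configuration $\eta^0$ with $\eta^0_e \equiv +1$ and use the linear relation \eqref{eqn: linear} to write $g_{\eta,\eta'} = g_{\eta,\eta^0} - g_{\eta',\eta^0}$. Since $\sigma^{+,b}(t)$ (respectively $\sigma^{-,b}(t)$) is precisely the minimizer of $g_{\,\cdot\,,\eta^0}(t)$ over configurations with $\eta_b = +1$ (respectively $\eta_b = -1$), this yields $\Delta_b(t) = m^+(t) - m^-(t)$, where $m^{\pm}(t) := \min_{\eta_b = \pm 1} g_{\eta,\eta^0}(t)$. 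Each $g_{\eta,\eta^0}(t)$ is continuous because $t \mapsto J_e(t)$ is continuous, so $m^{\pm}$ are minima of finitely many continuous functions, hence continuous, and therefore so is $\Delta_b$ on all of $[0,\infty)$. This representation is what makes the sign of $\Delta_b$ well behaved across the critical hyperplanes: although the \emph{identities} of $\sigma^{\pm,b}(t)$ jump as $J_\Lambda(t)$ meets $\mathcal C$, no spurious sign jump of $\Delta_b$ can occur---this is the same cancellation recorded in \eqref{eqn: flex equal} in the proof of Lemma \ref{lem: continuity}, now made automatic.

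Granting continuity, the conclusion is immediate from the intermediate value theorem. Suppose $F_b(s) > 0$ for all $s \in [0,t]$; then $\Delta_b(s) = \pm F_b(s) \neq 0$ for every such $s$, so the continuous function $\Delta_b$ cannot vanish and keeps the sign of $\Delta_b(0)$ throughout $[0,t]$. Consequently $\sigma_b(s) = +1 \iff \Delta_b(s) < 0 \iff \Delta_b(0) < 0 \iff \sigma_b(0) = +1$, giving $\sigma_b(s) = \sigma_b(0)$ at every $s$ where $\sigma_b$ is defined. To see that this covers all $s \in [0,t]$, I note that $\sigma_b(s)$ is ambiguous only when the global minimum of $g_{\,\cdot\,,\eta^0}(s)$ is attained both at a configuration with $\eta_b = +1$ and at one with $\eta_b = -1$, i.e. exactly when $m^+(s) = m^-(s)$, i.e. $\Delta_b(s) = 0$, i.e. $F_b(s) = 0$; the hypothesis $F_b > 0$ rules this out, so $\sigma_b(s)$ is unambiguous for every $s \in [0,t]$ even at the finitely many crossing times. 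Here Lemma \ref{lem: number of planes} guarantees, for $\nu$-almost every $(J_\Lambda, J'_\Lambda)$ (the degenerate case in which the curve lies entirely within a hyperplane being a null event), that $\{s : J_\Lambda(s) \in \mathcal C\}$ is finite and that $J_\Lambda(0) \notin \mathcal C$.

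The main obstacle is exactly the continuity of the \emph{signed} flexibility across the critical set: a priori $\Delta_b$ is described by different minimizing configurations on the different cells of $\R^{\Lambda^*} \setminus \mathcal C$, and one must exclude a sign discontinuity at a hyperplane crossing, which would correspond to an unphysical instantaneous flip of $\sigma_b$. The min-over-reference representation $\Delta_b = m^+ - m^-$ dispatches this cleanly; the only alternative I see---tracking the changing identities of $\sigma^{\pm,b}$ and invoking the cancellation \eqref{eqn: flex equal} case by case---is more cumbersome but equivalent in content.
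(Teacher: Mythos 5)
Your proof is correct, and it rests on the same essential mechanism as the paper's --- the flexibility varies continuously along the interpolation curve, so a flip of $\sigma_b$ can only occur where $F_b$ vanishes --- but the technical packaging is genuinely different and arguably cleaner. The paper argues by contraposition: it assumes $\sigma_b$ flips at some $t_0\in\mathcal C$, takes the one-sided limits of $\sigma(\cdot)$ as representatives for $\sigma^{\pm,b}$ at $t_0$, invokes the continuity of $F_b$ established in Lemma \ref{lem: continuity} to equate the two one-sided limits of $F_b$, and observes that the resulting two expressions are negatives of each other, forcing $F_b(t_0)=0$. You instead work with the \emph{signed} flexibility $\Delta_b=m^+-m^-$, where $m^\pm(t)=\min_{\eta_b=\pm1}g_{\eta,\eta^0}(t)$ is a minimum of finitely many affine (hence continuous) functions of $t$; continuity of $\Delta_b$ is then automatic, the sign of $\Delta_b$ determines $\sigma_b$, and the intermediate value theorem finishes the argument. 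This buys you two things: you do not need the case analysis across critical hyperplanes carried out in Lemma \ref{lem: continuity} (your min-representation subsumes the cancellation \eqref{eqn: flex equal}), and you do not really need the finiteness of crossing times from Lemma \ref{lem: number of planes} except to dispose of the null event where the curve lies inside a hyperplane and to ensure $J_\Lambda(0)\notin\mathcal C$. Your observation that $\sigma_b(s)$ is unambiguous precisely when $m^+(s)\neq m^-(s)$, i.e.\ when $F_b(s)>0$, also handles the well-definedness of $\sigma_b$ at crossing times more transparently than the paper's one-sided-limit device.
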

\begin{proof}
First, observe that since the curve $J_\Lambda(t)$ intersects $\mathcal C$ finitely many times by Lemma \ref{lem: number of planes}, the limits $\lim_{t\downarrow t_0} \sigma(t)$ and  $\lim_{t\uparrow t_0} \sigma(t)$ must be well-defined. 
Suppose there exists $t_0>0$ such that $\lim_{t \downarrow t_0}\sigma_b(t)=+1$ and $\lim_{t \uparrow t_0}\sigma_b(t)=-1$ (or vice-versa).
Then $t_0$ must belong to $\mathcal C$. 
Denote the two limits  $\lim_{t\downarrow t_0} \sigma(t)$ and  $\lim_{t\uparrow t_0} \sigma(t)$ by $\sigma^+$ and $\sigma^-$ respectively.
The excitations $\sigma^{+,b}$ and $\sigma^{-,b}$ might be degenerate at $t_0$. 
But by the continuity proved in Lemma \ref{lem: continuity}, the flexibility is independent of the choice of the representatives for $\sigma^{+,b}$ and $\sigma^{-,b}$. 
We pick $\sigma^+$ and $\sigma^-$ for representatives. This means that the flexibility at $t_0$ can be written in two ways using $\sigma^+$ and $\sigma^-$:
$$
\begin{aligned}
 E(\sigma^{+},\sigma^{-})-\sum_{e}J_e(t_0)(\sigma^{+}_e-\sigma^{-}_e)=\lim_{t\uparrow t_0}F_b(t)=
\lim_{t\downarrow t_0}F_b(t)&= E(\sigma^{-},\sigma^{+})-\sum_{e}J_e(t_0)(\sigma^{-}_e-\sigma^{+}_e)\ .
\end{aligned}
$$
Since one is the negative of the other (note that $E(\eta,\eta')=-E(\eta',\eta)$ by \eqref{eqn: linear}), we conclude that $F_b(t_0)=0$ as claimed.
\end{proof}

\begin{prop}
\label{prop: flex diff}
Consider $b\in \Lambda^*$ and the curve $s\mapsto J_\Lambda(s)$ defined in \eqref{eqn: line}. 
We have for all $0\leq t\leq 1$ that
$$
\big|F_b(t)-F_b(0)\big| \leq 6 \sqrt{t} \cdot \max_{e\in \Lambda^*}(|J_e|\vee |J_e'|)\cdot \max_{s\leq t}|\partial D_b(s)|\ .
$$
\end{prop}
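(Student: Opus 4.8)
The plan is to express $F_b(t)-F_b(0)$ as the integral of the derivative of $s\mapsto F_b(J_\Lambda(s))$ along the curve \eqref{eqn: line} and then to bound the integrand edge by edge. First I would invoke Lemma \ref{lem: number of planes}: for $\nu$-a.e.\ pair $(J_\Lambda,J_\Lambda')$ the curve $J_\Lambda(s)$ meets the critical set $\mathcal C$ at only finitely many $s$, and on each open subinterval between consecutive crossings the maps $\sigma(\cdot)$, $\sigma^{\pm,b}(\cdot)$, and hence $\partial\mathcal D_b(\cdot)$, are constant, with $F_b$ affine in $J_\Lambda$ and derivatives as in Lemma \ref{lem: continuity}. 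Since $F_b$ extends to a continuous (indeed globally Lipschitz, with slopes in $\{0,\pm 2\}$) function on $\R^{\Lambda^*}$ and $s\mapsto J_\Lambda(s)$ is absolutely continuous on $[0,t]$, the composition $s\mapsto F_b(J_\Lambda(s))$ is absolutely continuous, so that
\begin{equation*}
F_b(t)-F_b(0)=\int_0^t \frac{\rd}{\rd s}F_b(J_\Lambda(s))\,\rd s ,
\end{equation*}
the integrand being defined for a.e.\ $s$, namely all $s$ outside the finite crossing set.

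At such an $s$ I would apply the chain rule together with Lemma \ref{lem: continuity} to obtain
\begin{equation*}
\frac{\rd}{\rd s}F_b(J_\Lambda(s))=\sum_{e\in\partial\mathcal D_b(s)}2\,\sigma_e(s)\,\frac{\rd J_e(s)}{\rd s},
\qquad
\frac{\rd J_e(s)}{\rd s}=-\ee^{-s}J_e+\frac{\ee^{-2s}}{\sqrt{1-\ee^{-2s}}}\,J_e' .
\end{equation*}
Using $|\sigma_e(s)|=1$, writing $K=\max_{e\in\Lambda^*}(|J_e|\vee|J_e'|)$, and bounding the number of summands by $M=\max_{s\leq t}|\partial\mathcal D_b(s)|$, this would give
\begin{equation*}
\left|\frac{\rd}{\rd s}F_b(J_\Lambda(s))\right|\leq 2MK\left(\ee^{-s}+\frac{\ee^{-2s}}{\sqrt{1-\ee^{-2s}}}\right).
\end{equation*}

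It then remains only to integrate the two elementary terms. One has $\int_0^t \ee^{-s}\,\rd s=1-\ee^{-t}\leq t\leq\sqrt t$ for $t\leq1$, while the substitution $u=1-\ee^{-2s}$ gives $\int_0^t \frac{\ee^{-2s}}{\sqrt{1-\ee^{-2s}}}\,\rd s=\sqrt{1-\ee^{-2t}}\leq\sqrt{2t}$. Collecting these yields $|F_b(t)-F_b(0)|\leq 2MK(1+\sqrt2)\sqrt t\leq 6MK\sqrt t$, since $2(1+\sqrt2)<6$, which is the claim. The only genuinely delicate point is the justification of the displayed fundamental-theorem-of-calculus identity across the kinks of $F_b$ and through the singularity of $\frac{\rd J_e}{\rd s}$ at $s=0$; both are absorbed by the absolute-continuity argument of the first paragraph, the singular term being integrable precisely because $\sqrt{1-\ee^{-2s}}$ is the indefinite integral of $\frac{\ee^{-2s}}{\sqrt{1-\ee^{-2s}}}$ vanishing at $s=0$.
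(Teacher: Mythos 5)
Your proof is correct and follows essentially the same route as the paper: both decompose $F_b(t)-F_b(0)$ along the finitely many crossings of the critical set (Lemma \ref{lem: number of planes}) and use the gradient formula of Lemma \ref{lem: continuity}. The only difference is cosmetic --- you bound the integrand $\bigl|\tfrac{\rd}{\rd s}F_b(J_\Lambda(s))\bigr|$ pointwise and integrate (handling the integrable singularity of $\rd J_e/\rd s$ at $s=0$ correctly), whereas the paper evaluates the integral as a telescoping sum of increments $J_e(t_{k+1})-J_e(t_k)$ before bounding; both yield the constant $6$.
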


\begin{proof}
Let $K$ be the number of critical hyperplanes crossed by $J_\Lambda(s)$ before time $t$. By Lemma \ref{lem: number of planes}, this number is less than $4^{|\Lambda^*|}$.
Moreover, if we denote by $t_k$, $k\leq K$, the values at which the curve intersects $\mathcal C$, we must have that it intersects exactly one hyperplane almost surely by the same lemma. 
This means that the maps $s\mapsto\sigma(s)$ and $s\mapsto\sigma^{\pm,b}(s)$ (and in particular the critical droplet $\mathcal D_b(s)$) are well-defined and constant on each interval $(t_k,t_{k+1})$. 
By the continuity of the flexibility in Lemma \ref{lem: continuity}, it is therefore possible to expand $F_b(t)$ as follows
\begin{equation}
\label{eqn: decomp}
\begin{aligned}
F_b(t)-F_b(0)
&=\sum_{k: t_k<t}\int_{t_k}^{t_{k+1}\wedge t}  \nabla F_b(s) \cdot \frac{\rd J_\Lambda}{\rd s} (s) \rd s\\
&=\sum_{k: t_k<t}\sum_{e\in\partial\mathcal D_b(k)}2\sigma_e(k) \{J_e(t_{k+1}\wedge t)- J_e(t_{k})\}\ ,
\end{aligned}
\end{equation}
where we used the gradient in Lemma \ref{lem: continuity}. 
The notation $\partial\mathcal D_b(k)$ stands for $\partial\mathcal D_b(s)$ when $s\in (t_k,t_{k+1})$, and similarly for $\sigma_e(k)$.
Note that
$$
\begin{aligned}
|J_e(t_{k+1})-J_e(t_{k})|
&=|J_e|(\ee^{-t_{k}}-\ee^{-t_{k+1}})+|J'_e|(\sqrt{1-\ee^{-2t_{k+1}}}-\sqrt{1-\ee^{-2t_{k}}})\\
&\leq \max_{e\in \Lambda^*}(|J_e|\vee |J_e'|) \cdot \Big(\ee^{-t_{k}}-\ee^{-t_{k+1}}+\sqrt{1-\ee^{-2t_{k+1}}}-\sqrt{1-\ee^{-2t_{k}}}\Big)\ .
\end{aligned}
$$
Putting this estimate back in \eqref{eqn: decomp} yields
$$
|F_b(t)-F_b(0)|
\leq    2\max_{e\in \Lambda^*}(|J_e|\vee |J_e'|)\cdot \max_{s\leq t}|\partial D_b(s)|\cdot(1-\ee^{-t} + \sqrt{1-\ee^{-2t}})\ .
$$
The final estimate follows from the fact that $1-\ee^{-x}\leq x$ for $x\geq 0$, and $t+\sqrt{2t}\leq 3\sqrt{t}$ for $0\leq t\leq1$.

\end{proof}

\section{A Variance Bound for Gaussian Couplings}
\label{sect: lemma}
In this section, we prove variance bounds using the local modification of couplings described in Section \ref{sect: metastate}.
The main result is the proof of Theorem \ref{thm: main} relating the existence of incongruent states and disorder chaos.
The following result is standard, see e.g. \cite{AT07,C14}. We prove it for completeness.
\begin{lem}
\label{lem: gaussian variance}
Let $Y=(Y_i,i\leq n)$ and $Y'=(Y'_i,i\leq n)$  be two independent copies of a $n$-dimensional Gaussian vector.
Consider $h: \R^n\to \R$ in $\mathcal C^2(\R^n)$ with bounded derivatives. We have
\begin{equation}
\var (h(Y))=\int_0^{\infty} \sum_{i\leq n}\E\left[\partial_ih(Y)\cdot\partial_ih(Y(s))\right] \ee^{-s}\rd s\ ,
\end{equation}
where $Y(s)=\ee^{-s}Y+\sqrt{1-\ee^{-2s}}Y'$. 
In particular, for any $t\geq 0$,
\begin{equation}
\label{eqn: bound pos}
\var (h(X))\geq \int_0^{t} \sum_{i\leq n}\E\left[\partial_ih(Y)\cdot\partial_ih(Y(s))\right] \ee^{-s}\rd s\ .
\end{equation}
\end{lem}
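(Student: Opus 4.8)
The plan is to prove the identity by the standard Gaussian (Mehler / Ornstein--Uhlenbeck) interpolation argument, and then deduce the displayed lower bound by checking that the integrand is pointwise nonnegative. Write $\gamma$ for the standard Gaussian law of $Y$ (the $Y_i$ are i.i.d.\ $N(0,1)$, as forced by the diagonal form $\sum_i$ on the right-hand side), and set
$$
\phi(s)=\E\big[h(Y)\,h(Y(s))\big],\qquad Y(s)=\ee^{-s}Y+\sqrt{1-\ee^{-2s}}\,Y'\ .
$$
Since $\nabla h$ is bounded, $h$ is Lipschitz, so $h(Y)$ has all moments and $\var(h(Y))$ is finite. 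At $s=0$ we have $\phi(0)=\E[h(Y)^2]$, while as $s\to\infty$ the coefficient $\ee^{-s}\to 0$ and $\sqrt{1-\ee^{-2s}}\to 1$, so $Y(s)\to Y'$, which is independent of $Y$; hence $\phi(\infty)=\E[h(Y)]\,\E[h(Y')]=(\E[h(Y)])^2$. Thus $\var(h(Y))=\phi(0)-\phi(\infty)=-\int_0^\infty \phi'(s)\,\rd s$, and everything reduces to computing $\phi'$.

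First I would differentiate under the expectation (legitimate since $\nabla h$ is bounded and $\tfrac{\rd}{\rd s}Y(s)$ has finite moments) to obtain
$$
\phi'(s)=\sum_i \E\Big[h(Y)\,\partial_i h(Y(s))\Big(-\ee^{-s}Y_i+\tfrac{\ee^{-2s}}{\sqrt{1-\ee^{-2s}}}\,Y'_i\Big)\Big]\ .
$$
Then I apply Gaussian integration by parts (Stein's identity $\E[Z_iF]=\E[\partial_{Z_i}F]$) separately in the variable $Y_i$ on the first term and in $Y'_i$ on the second. Using $\partial Y_j(s)/\partial Y_i=\ee^{-s}\delta_{ij}$ and $\partial Y_j(s)/\partial Y'_i=\sqrt{1-\ee^{-2s}}\,\delta_{ij}$, the $Y_i$-term yields $-\ee^{-s}\E[\partial_i h(Y)\partial_i h(Y(s))]-\ee^{-2s}\E[h(Y)\partial_i^2 h(Y(s))]$, while the $Y'_i$-term yields $+\ee^{-2s}\E[h(Y)\partial_i^2 h(Y(s))]$; note that the apparent singularity from $1/\sqrt{1-\ee^{-2s}}$ is cancelled by the Jacobian factor $\sqrt{1-\ee^{-2s}}$. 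The Hessian contributions cancel, leaving
$$
\phi'(s)=-\ee^{-s}\sum_i \E\big[\partial_i h(Y)\,\partial_i h(Y(s))\big]\ ,
$$
and integrating against $-\rd s$ over $[0,\infty)$ gives the stated identity.

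For the ``in particular'' inequality it suffices to show the integrand is nonnegative for every $s$, since then $\int_0^\infty\ge\int_0^t$. I would establish this through the semigroup structure of the interpolation. Writing $P_r g(x)=\E_Z\big[g(\ee^{-r}x+\sqrt{1-\ee^{-2r}}\,Z)\big]$ for the Mehler semigroup, conditioning on $Y$ gives $\E[\partial_i h(Y)\,\partial_i h(Y(s))]=\E[\partial_i h(Y)\,(P_s\partial_i h)(Y)]$. The semigroup law $P_s=P_{s/2}P_{s/2}$, together with the self-adjointness of $P_{s/2}$ on $L^2(\gamma)$ (which follows from the symmetry of the joint law of $(Y,Y(s/2))$, a centered Gaussian with each block standard and cross-covariance $\ee^{-s/2}I$), then yields
$$
\E\big[\partial_i h(Y)\,(P_s\partial_i h)(Y)\big]=\E\big[\big((P_{s/2}\partial_i h)(Y)\big)^2\big]\ge 0\ .
$$
Summing over $i$ gives nonnegativity of the integrand, hence the bound.

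The main obstacle is the bookkeeping in the integration-by-parts step: one must carry both the $Y$- and $Y'$-derivatives, track the two Jacobian factors correctly, and verify that the second-derivative (Hessian) terms cancel exactly --- this cancellation is what makes the formula clean and is the crux of the computation. The integrability and differentiability justifications (differentiating under $\E$, validity of Stein's identity, and the removable singularity at $s=0$) are routine given the boundedness of the derivatives of $h$, but should be noted.
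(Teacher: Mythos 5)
Your proof is correct and follows essentially the same route as the paper: both differentiate the Ornstein--Uhlenbeck interpolation $\E[h(Y)h(Y(s))]$ in $s$ and reduce the derivative via Gaussian integration by parts. The only cosmetic difference is that the paper introduces a third independent copy $Y''$ and applies the second-order interpolation formula to the $2n$-dimensional vector built from $(Y,Y)$ and $(Y',Y'')$, which makes the nonnegativity of the integrand immediate by conditioning on $Y$, whereas you compute the derivative directly with first-order Stein identities (verifying the Hessian cancellation) and get nonnegativity from the semigroup identity $\E[f\,P_sf]=\E[(P_{s/2}f)^2]$ --- the same fact in different clothing.
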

\begin{proof}
Consider the $(2n)$-dimensional Gaussian vector $X(t)=\ee^{-t}(Y,Y)+\sqrt{1-\ee^{-2t}}(Y',Y'')$ where $Y''$ is yet another independent copy of $Y$.
Write $X_A=X_A(s)=\ee^{-s}Y+\sqrt{1-\ee^{-2s}}Y'$ for the first $n$ component of $X(t)$, and $X_B=X_B(s)=\ee^{-s}Y+\sqrt{1-\ee^{-2s}}Y''$ for the $n$ last. 
It is clear that
\begin{equation}
\label{eqn: var1}
\var (h(X))=\int_0^{\infty} -\frac{\rd }{\rd s}\E[h(X_A)h(X_B)] \rd s .
\end{equation}
Gaussian integration by parts implies that for a function $g: \R^{2n}\to \R$ of moderate growth 
and two independent, but not identically distributed, $2n$-dimensional vectors  $Z$ and $Z'$,
$$
\frac{\rd }{\rd u}\E[g(Z(u))]=\frac{1}{2}
\sum_{i,j=1}^{2n} \left(\E[Z_iZ_j]-\E[Z'_iZ'_j]\right)\E[\partial_i\partial_j g(Z(u))]\ ,
$$
for $Z(u)=\sqrt{u}Z+\sqrt{1-u} Z'$, see e.g.~\cite{AT07}.
We apply this with $Z=(Y,Y)$, $Z'=(Y',Y'')$ and $g(Z(u))=h(\sqrt{u}Y+\sqrt{1-u} Y')\cdot h(\sqrt{u}Y+\sqrt{1-u} Y'')$. 
In this instance, by independence, we have $\E[Z_iZ_j]=\E[Z'_iZ'_j]=0$ unless $i=j$, $i=j+n$ or $j=i+n$. The case $i=j$ gives $\E[Z_iZ_j]-\E[Z'_iZ'_j]=0$ so only the two others gives a non-zero contribution with $\E[Z_iZ_j]-\E[Z'_iZ'_j]=\E[Z_iZ_j]=1$.
The derivatives in both cases $i=j+n$ and $j=i+n$ are
$$
\E[\partial_i\partial_j g(Z(u))]=\E[\partial_ih(X_A)\cdot \partial_j h(X_B)]\ .
$$
Putting this back in \eqref{eqn: var1} with $u=\ee^{-2s}$ yields
$$
\var (h(X))=\int_0^{\infty} \sum_{i\leq n}\E[\partial_ih(X_A)\cdot \partial_ih(X_B)] \ 2\ee^{-2s}\rd s
$$
since $\frac{\rd }{\rd u}=-2 \ee^{-2s}\frac{\rd }{\rd s}$. 
Observe that the joint distribution of $(X_A,X_B)$ is the same as $(Y,Y(t))$. 
The first claim then follows by the change of variable $s\to 2s$.
The second claim is straightforward from the fact that the term $\E[\partial_ih(X_A)\cdot \partial_i h(X_B)]$ is non-negative as can be seen by conditioning on $Y$.
\end{proof}

Recall the definition of the ground state map \eqref{eqn: gs map}.
As given in Definition \ref{df: GS prob}, the variance of $H_{\Lambda,\J}(\sigma^1)-H_{\Lambda,\J}(\sigma^2)$ under 
$\rd \PP= \rd \nu (J)\times \rd \nu (J') \times \rd \kappa^1_{J}(\sigma^1)\times  \rd \kappa^2_{J}(\sigma^2)$
 is equal to the variance of $H_{\Lambda,\J}(\sigma^1(J_\Lambda))-H_{\Lambda,\J}(\sigma^2(J_\Lambda))$
under the measure
\begin{equation}
\label{eqn: prob2}
\rd \PP= \rd \nu (J)\times  \rd \nu (J') \times  \rd \kappa^1_{J_{\Lambda^c}}(\vec E^1)\times  \rd \kappa^2_{J_{\Lambda^c}}(\vec E^2)
\end{equation}
We consider $J_\Lambda(t)$ as in Equation \eqref{eqn: line}, and $\sigma(J_{\Lambda}(t))=\sigma(t)$ for short.
\begin{lem}
\label{lem: variance}
Consider $\Lambda \subset \Z^d$ finite. 
We have for every $t\geq 0$,
\begin{equation}
\label{eqn: variance}
\var\Big(H_{\Lambda,\J}(\sigma^1(0))-H_{\Lambda,\J}(\sigma^2(0) )\Big)
\geq \int_0^t \sum_{b\in \Lambda^*}\E\Big[\big(\sigma^1_b(s)-\sigma^2_b(s)\big)\cdot\big(\sigma^1_b(0)-\sigma^2_b(0)\big) \Big]\ee^{-s}\rd s\ .
\end{equation}
\end{lem}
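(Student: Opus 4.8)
The plan is to derive \eqref{eqn: variance} by conditioning on the boundary data and then applying the Gaussian variance bound of Lemma~\ref{lem: gaussian variance} inside $\Lambda$. Write $V:=H_{\Lambda,\J}(\sigma^1(0))-H_{\Lambda,\J}(\sigma^2(0))$ and let $\mathcal F$ be the $\sigma$-algebra generated by the boundary data $(J_{\Lambda^c},\vec E^1,\vec E^2)$. By the law of total variance,
\[
\var(V)\ \geq\ \E\big[\var(V\mid\mathcal F)\big],
\]
so it suffices to bound the conditional variance for $\nu$-a.e.\ realization of $\mathcal F$. The reason for conditioning on $\mathcal F$ rather than on $J_\Lambda$ is the independence in Lemma~\ref{lem: metastate}: conditionally on $\mathcal F$ the vector $J_\Lambda$ is still an i.i.d.\ standard Gaussian vector on $\R^{\Lambda^*}$ and $J_\Lambda'$ an independent copy, while by Proposition~\ref{prop: ordering} and Definition~\ref{df: GS prob} each restricted ground state is the explicit function $J_\Lambda\mapsto\sigma^i(J_\Lambda)$. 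Conditionally we are thus exactly in the setting of Lemma~\ref{lem: gaussian variance}, with $Y=J_\Lambda$, $Y'=J_\Lambda'$, $n=|\Lambda^*|$, and $h(J_\Lambda)=H_{\Lambda,\J}(\sigma^1(J_\Lambda))-H_{\Lambda,\J}(\sigma^2(J_\Lambda))$.

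Next I would compute the gradient of $h$. On $\R^{\Lambda^*}\setminus\mathcal C$ each map $\sigma^i(\cdot)$ is locally constant by Proposition~\ref{prop: ordering}, so $h$ is affine there and, since $H_{\Lambda,\J}(\eta)=-\sum_{e\in\Lambda^*}J_e\eta_e$,
\[
\frac{\partial h}{\partial J_b}(J_\Lambda)=-\big(\sigma^1_b(J_\Lambda)-\sigma^2_b(J_\Lambda)\big),\qquad b\in\Lambda^*.
\]
The essential structural input---parallel to the continuity of the flexibility proved in Lemma~\ref{lem: continuity}---is that $h$ extends to a continuous, piecewise-affine, hence globally Lipschitz (constant at most $2|\Lambda^*|^{1/2}$) function of $J_\Lambda$: across a critical hyperplane the two competing ground-state configurations are energetically degenerate, so although the selected configuration jumps, the difference of ground-state energies does not. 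Since $\mathcal C$ is a finite union of hyperplanes and thus Lebesgue-null, the displayed identity holds for a.e.\ $J_\Lambda$, with a.e.\ gradient bounded by $2$ in each coordinate.

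Because $h$ is Lipschitz but not $\mathcal C^2$, Lemma~\ref{lem: gaussian variance} does not apply verbatim; I would instead mollify, setting $h_\delta=h\ast\phi_\delta$ for a smooth approximate identity $\phi_\delta$. Then $h_\delta\in\mathcal C^\infty$ with $\partial_b h_\delta=(\partial_b h)\ast\phi_\delta$ uniformly bounded by $2$ and converging to $-(\sigma^1_b-\sigma^2_b)$ at every point of $\R^{\Lambda^*}\setminus\mathcal C$. Applying the lower bound \eqref{eqn: bound pos} to $h_\delta$ conditionally on $\mathcal F$ gives
\[
\var(h_\delta\mid\mathcal F)\ \geq\ \int_0^t\sum_{b\in\Lambda^*}\E\big[\partial_b h_\delta(J_\Lambda)\,\partial_b h_\delta(J_\Lambda(s))\mid\mathcal F\big]\,\ee^{-s}\rd s .
\]
Letting $\delta\downarrow0$, the left side converges to $\var(h\mid\mathcal F)$ by $L^2$-convergence of $h_\delta$. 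On the right, for a.e.\ $(J_\Lambda,J_\Lambda')$ the curve $J_\Lambda(s)$ of \eqref{eqn: line} meets $\mathcal C$ only finitely often by Lemma~\ref{lem: number of planes}, so $J_\Lambda,J_\Lambda(s)\notin\mathcal C$ for a.e.\ $s$; the integrand is bounded by $4$ and converges pointwise, so dominated convergence identifies the limit with the conditional version of the right-hand side of \eqref{eqn: variance}. Taking $\E$ and combining with the law of total variance yields the claim.

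I expect the genuine obstacle to be the continuity step, not the computation. If one treated $h$ as merely piecewise constant, or ignored the degeneracy at the critical hyperplanes, the mollified derivatives would blow up like $\delta^{-1}$ near the jumps, dominated convergence would fail, and the resulting inequality would in fact be false. Establishing that the difference of ground-state energies is continuous across $\mathcal C$---so that the only surviving contribution is the bounded a.e.\ gradient $-(\sigma^1_b-\sigma^2_b)$---is therefore the crux, and it is exactly where the linearity \eqref{eqn: linear} and the continuity mechanism of Lemma~\ref{lem: continuity} enter.
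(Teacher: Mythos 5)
Your overall strategy is exactly the paper's: bound the variance below by $\E[\var(\cdot\mid J_{\Lambda^c},\vec E^1,\vec E^2)]$, use the independence from Lemma~\ref{lem: metastate} to keep $J_\Lambda$ Gaussian under the conditioning, apply Lemma~\ref{lem: gaussian variance} with $h(J_\Lambda)=H_{\Lambda,\J}(\sigma^1(J_\Lambda))-H_{\Lambda,\J}(\sigma^2(J_\Lambda))$, and identify the a.e.\ partial derivatives with $-(\sigma^1_b-\sigma^2_b)$. You are right that Lemma~\ref{lem: gaussian variance} does not apply verbatim to a non-smooth $h$ and that some regularity statement is the crux; the paper's own proof is silent on this point and simply plugs the a.e.\ derivative into the formula.

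However, the regularity claim on which your mollification argument rests is false: $h$ does \emph{not} extend continuously across the critical set. What is degenerate on a critical hyperplane is the \emph{total} relative energy $E(\eta,\eta')+H_{\Lambda,J}(\eta)-H_{\Lambda,J}(\eta')=0$ (this is the defining relation in \eqref{eqn: C} and \eqref{eqn: diff}), so on that hyperplane $H_{\Lambda,J}(\eta)-H_{\Lambda,J}(\eta')=-E(\eta,\eta')$, which is generically nonzero; the in-volume energy $H_{\Lambda,J}(\sigma^i(J_\Lambda))$ therefore jumps by $-E(\eta,\eta')$ when the ground state switches from $\eta$ to $\eta'$. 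The one-edge example of Remark~\ref{rem: examples} makes this concrete: there $H_{\Lambda,J}(\sigma(J_b))=-J_b\,\mathrm{sign}(J_b-\mathcal C_b)$, which jumps by $-2\mathcal C_b=-E(+,-)$ at $J_b=\mathcal C_b$. The continuity mechanism of Lemma~\ref{lem: continuity} works for the flexibility precisely because $F_b$ \emph{includes} the boundary term $E(\sigma^{+,b},\sigma^{-,b})$; that term is absent from $h$. Consequently your mollified derivatives $\partial_b h_\delta$ do blow up like $\delta^{-1}$ near $\mathcal C$, dominated convergence fails, and the argument collapses in exactly the way you flagged. The object that \emph{is} Lipschitz with a.e.\ gradient $-(\sigma^1_b-\sigma^2_b)$ is the total ground-state energy difference $\bigl[H_{\Lambda,J}(\sigma^1)+E^1(\sigma^1,\eta^0)\bigr]-\bigl[H_{\Lambda,J}(\sigma^2)+E^2(\sigma^2,\eta^0)\bigr]$, a difference of minima of affine functions; but its conditional variance given $(J_{\Lambda^c},\vec E^1,\vec E^2)$ is not the same as that of $h$, since the $E^i$ terms still depend on $J_\Lambda$ through $\sigma^i(J_\Lambda)$. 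So the gap you correctly identified as the crux is not closed by your argument (nor, as written, by the paper's), and closing it requires more than invoking degeneracy on the critical hyperplanes.
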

\begin{proof}
By conditioning on $(J_{\Lambda_c}, \vec E)$ we get by the conditional variance formula
$$
\var\Big(H_{\Lambda,\J}(\sigma^1(J_\Lambda))-H_{\Lambda,\J}(\sigma^2(J_\Lambda)) \Big)
\geq  \E\left[\var\Big(H_{\Lambda,\J}(\sigma^1(J_\Lambda))-H_{\Lambda,\J}(\sigma^2(J_\Lambda)) \Big| J_{\Lambda^c}, \vec E^1,\vec E^2\Big)\right]\ .
$$
The distribution of $J_{\Lambda}$ conditioned on $(J_{\Lambda_c}, \vec E^1,\vec E^2)$ remains IID Gaussian  by the independence in Lemma \ref{lem: metastate}.
We apply Lemma \ref{lem: gaussian variance} with $Y=J_\Lambda$ and $Y(t)=J_\Lambda(t)$.
To compute the derivatives, we used Proposition \ref{prop: ordering} and the definition of the ground state map \eqref{eqn: gs map}. Since the ground state $\sigma(J_\Lambda)$ is constant and well-defined on a set of full measure, the derivative $\partial_{J_b} \sigma^1_e(J_\Lambda)$ is $0$ $\nu$-a.s for every edge $e$.
Therefore we have
$$
\frac{\partial}{\partial J_b}\{H_{\Lambda,\J}(\sigma^1(J_\Lambda))-H_{\Lambda,\J}(\sigma^2(J_\Lambda))\}
=-(\sigma_b^1(J_{\Lambda})-\sigma_b^2(J_{\Lambda}))\ \ \nu-a.s.
$$
We conclude that 
$$
\begin{aligned}
&\E\left[\var\Big(H_{\Lambda,\J}(\sigma^1(J_\Lambda))-H_{\Lambda,\J}(\sigma^2(J_\Lambda)) \Big| J_{\Lambda^c}, \vec E^1, \vec E^2\Big)\right]\\
&= \sum_{b\in \Lambda^*}\int_0^\infty
 \E\left[(\sigma_b^1(J_\Lambda)-\sigma_b^2(J_\Lambda))(\sigma_b^1(J_\Lambda(s))-\sigma_b^2(J_\Lambda(s))\right] e^{-s}\rd s\ .
 \end{aligned}
$$
The lower bound restricted $t\geq 0$ follows from \eqref{eqn: bound pos}. The restriction to one edge $b$ holds for the same reason since the integrand is positive.
\end{proof}

\begin{proof}[Proof of Theorem \ref{thm: main}]
The theorem is an elementary consequence of Lemma \ref{lem: variance}.
First observe that the quantity
$$
\left(2-2Q_\Lambda(\sigma,\sigma')\right)^{1/2}=\frac{1}{|\Lambda^*|^{1/2}}\left(\sum_{b\in \Lambda^*} (\sigma_b- \sigma_b')^2\right)^{1/2}
=:\|\sigma-\sigma'\|
$$
satisfies the triangle inequality. In particular, we have
\begin{equation}
\|\sigma-\sigma'\|\geq \Big|\|\sigma-\sigma''\|-\|\sigma'-\sigma''\|\Big|\ .
\end{equation}
This inequality implies
$$
\begin{aligned}
Q_\Lambda(\sigma,\sigma')-Q_\Lambda(\sigma',\sigma'')
&=1-Q_\Lambda(\sigma',\sigma'')-1+Q_\Lambda(\sigma,\sigma')\\
&\geq \frac{1}{2}\left(\Big|\|\sigma-\sigma'\|-\|\sigma-\sigma''\|\Big|\right)^2-\frac{1}{2} \|\sigma-\sigma'\|^2\\
&=\frac{1}{2}\|\sigma-\sigma''\|^2-\|\sigma-\sigma'\|\|\sigma-\sigma''\|\\
&\geq \frac{1}{2}\|\sigma-\sigma''\|^2-2\|\sigma-\sigma'\|\ ,
\end{aligned}
$$
since $\|\sigma-\sigma''\|\leq 2$. We apply this inequality to $\sigma=\sigma^1(0)$, $\sigma'=\sigma^1(s)$, $\sigma''=\sigma^2(0)$ (and again with $1$ replaced by $2$) to rewrite the integrand in \eqref{eqn: variance} as
$$
|\Lambda^*|\cdot  \E\Big[\|\sigma^1(0)-\sigma^2(0)\|^2-2\sum_{i=1,2}\|\sigma^i(0)-\sigma^i(s)\|\Big]\ .
$$

By putting this back in \eqref{eqn: variance}, we have
$$
\begin{aligned}
&\frac{1}{|\Lambda^*|}\var\Big(H_{\Lambda,\J}(\sigma^1(J_\Lambda))-H_{\Lambda,\J}(\sigma^2(J_\Lambda)) \Big)\\
&\geq  \int_0^t\left\{2\E\Big[1-Q_\Lambda(\sigma^1(0),\sigma^2(0))\Big]-2\sqrt{2}\sum_{i=1,2} \E\Big[\Big(1-Q_\Lambda(\sigma^i(0),\sigma^i(s))\Big)^{1/2}\Big]\right\}\ee^{-s}\rd s\ .
\end{aligned}
$$

The claim follows by applying Jensen's inequality to the second term.
\end{proof}

\section{Disorder Chaos and Critical Droplets}
We start by establishing Corollary \ref{cor: ADC-> variance} as an elementary consequence of Theorem  \ref{thm: main}.
\begin{proof}[Proof of Corollary \ref{cor: ADC-> variance}]
On one hand, by definition of disorder chaos at scale $\alpha$, we have that for every $\e>0$ there is $C(\e)>0$ and $A_\e$ with $\PP(A_\e)>1-\e$ such that for every $t\leq C|\Lambda|^{-\alpha}$ and $\Lambda$ large enough
\begin{equation}
\label{eqn: overlap estimate}
\E\big[1-Q_\Lambda(\sigma^1, \sigma^1(t))\big]
= \E\big[1-Q_\Lambda(\sigma^1, \sigma^1(t));A_\e\big]+ \E\big[1-Q_\Lambda(\sigma^1, \sigma^1(t));A_\e^c\big]
\leq \e (1-\e)+2\e\ .
\end{equation}
On the other hand, if there exist incongruent states with positive $\PP$-probability, we must have by Fatou's lemma
\begin{equation}
\label{eqn: fatou}
\liminf_{\Lambda\to\Z^d}\E\Big[1-Q_\Lambda(\sigma^1(0),\sigma^2(0))\Big]
\geq 1-\E\Big[\limsup_{\Lambda\to \Z^d}Q_\Lambda(\sigma^1(0),\sigma^2(0))\Big]>0\ .
\end{equation}
The result follows from Theorem \ref{thm: main} by taking $\e$ small enough and $\Lambda$ large enough so that 
the right-hand side of Equation \eqref{eqn: variance bound} is strictly greater than $0$ uniformly for $s\leq C|\Lambda|^{-\alpha}$. 
\end{proof}

To prove Theorem \ref{thm: ADC}, we need the existence of many edges on which a given ground state is not too sensitive.
Since the statements of Theorem \ref{thm: ADC} involve only one replica $\sigma^1$, we set for the rest of this section
$$
\rd \PP= \rd \nu (J)\times  \rd \nu (J') \times  \rd \kappa_{J_{\Lambda^c}}(\vec E)\ .
$$
\begin{lem}
\label{lem: flex}
For any $\e>0$, there exists $\delta=\delta(\e)$ (independent of $\Lambda$) and a subset $B_\e$ of $(J, \vec E)$ with $\PP(B_\e)>1-\e$ such that on $B_\e$
$$
\#\{b\in \Lambda^*: |F_b(J_\Lambda)|>\delta\}> (1-\e)|\Lambda^*|\ .
$$
\end{lem}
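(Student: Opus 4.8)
The plan is to show that the flexibility $F_b(J_\Lambda)$ is small for only a small fraction of edges, and the natural quantity to control is the sum $\sum_{b\in\Lambda^*} F_b(J_\Lambda)$, since a Markov-type argument then bounds the number of edges with small flexibility. The key observation is that the flexibility $F_b$ at an edge $b$ measures the energy cost of flipping the critical droplet $\mathcal D_b$, and by the ground-state property this cost is controlled by the boundary couplings. Concretely, from the definition \eqref{eqn: flex} together with the ground-state ordering in Proposition \ref{prop: ordering}, the flexibility $F_b(J_\Lambda)$ equals the (absolute value of the) energy difference between the two best configurations that disagree at $b$, and this equals the energy of the critical droplet boundary $\partial\mathcal D_b$. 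I would first establish a deterministic bound of the form
\begin{equation}
\label{eqn: flex sum bound}
\sum_{b\in\Lambda^*} F_b(J_\Lambda)\ \le\ C\sum_{e\in\Lambda^*}|J_e|\cdot(\text{multiplicity of } e),
\end{equation}
where the multiplicity counts how many droplet boundaries $\partial\mathcal D_b$ contain a given edge $e$. The main structural input is that an edge of the ground state cannot lie in too many critical droplet boundaries, or more precisely that the total boundary energy summed over edges $b$ is comparable to a single Gaussian sum.

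First I would use the fact established in Section \ref{sect: metastate} that $\sigma^{+,b}$ and $\sigma^{-,b}$ differ only on $\partial\mathcal D_b$, so that
\begin{equation}
\label{eqn: flex as droplet}
F_b(J_\Lambda)=\Big|\sum_{e\in\partial\mathcal D_b} 2 J_e\,\sigma_e(J_\Lambda)-E(\sigma^{+,b},\sigma^{-,b})\Big|.
\end{equation}
The crucial point is that $F_b$ is precisely the energy gap to the best competing state, and since the ground state is a minimizer, this gap is bounded above by the energy cost of the smallest droplet through $b$, which in turn is bounded by the relevant boundary couplings via the bound $E(\eta,\eta')\le\sum_{e\in\partial\Lambda}2|J_e|$ from Lemma \ref{lem: metastate}. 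I expect that the right way to aggregate these is to bound the average flexibility: one shows $\E[F_b(J_\Lambda)]\le C$ uniformly in $b$ and $\Lambda$, using that $F_b$ is a Lipschitz function of the Gaussian couplings $J_\Lambda$ (its gradient has at most $|\partial\mathcal D_b|$ nonzero entries, each equal to $\pm 2$, but the relevant a priori bound comes from $F_b$ being dominated by a fixed finite collection of couplings near $b$ once one conditions appropriately).

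Given a uniform bound $\E[F_b]\le C$, I would argue as follows. For any threshold $\delta$, Markov's inequality gives $\PP(F_b\le\delta)$ is not directly what we want; instead I would control the \emph{fraction} of small edges. Let $N_\delta=\#\{b:F_b\le\delta\}$. By a first-moment estimate, $\E[N_\delta]=\sum_b\PP(F_b\le\delta)$, and I would show $\PP(F_b\le\delta)\le c(\delta)$ with $c(\delta)\to 0$ as $\delta\to 0$, uniformly in $b$ and $\Lambda$. This last uniform smallness is the heart of the matter: it says that the flexibility at a typical edge is bounded away from zero with high probability at a rate independent of system size. Once this holds, $\E[N_\delta]\le c(\delta)|\Lambda^*|$, and a final Markov step on $N_\delta$ (choosing $c(\delta)$ and $\delta$ appropriately in terms of $\e$) produces the event $B_\e$ with $\PP(B_\e)>1-\e$ on which $N_\delta<\e|\Lambda^*|$, i.e. more than $(1-\e)|\Lambda^*|$ edges have $F_b>\delta$.

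The main obstacle is establishing the \emph{uniform-in-$\Lambda$} small-ball estimate $\PP(F_b\le\delta)\le c(\delta)$ with $c(\delta)\to 0$. The difficulty is that $F_b$ depends on $J_\Lambda$ through the droplet $\mathcal D_b$, whose size can grow with $\Lambda$, so one cannot naively treat $F_b$ as a function of boundedly many couplings. The resolution I would pursue is to condition on $\vec E$ and on all couplings except $J_b$ itself: by the one-edge analysis in Remark \ref{rem: examples}, as a function of $J_b$ alone the flexibility behaves like $F_b=2|J_b-\mathcal C_b|$ where the critical value $\mathcal C_b$ is independent of $J_b$ (by the independence in Lemma \ref{lem: metastate}). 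Since $J_b$ is standard Gaussian and independent of $\mathcal C_b$, the conditional law of $F_b$ has a bounded density, so $\PP(F_b\le\delta\mid \text{rest})\le C\delta$ uniformly, giving $\PP(F_b\le\delta)\le C\delta$ with no dependence on $\Lambda$. Taking $\delta=\delta(\e)$ small then closes the argument; the careful point is verifying that the effective one-dimensional picture in $J_b$ persists for general $\Lambda$, which follows from the piecewise-affine structure of $F_b$ and the fact that $\partial F_b/\partial J_b=\pm 2$ whenever $b\in\partial\mathcal D_b$, together with the observation that $b$ always lies in its own critical droplet boundary.
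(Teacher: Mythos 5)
Your proof is correct and follows essentially the same route as the paper: a first-moment/Markov bound reduces the lemma to the uniform small-ball estimate $\PP(|F_b|\le\delta)\le C\delta$, which you obtain exactly as the paper does by conditioning on everything but $J_b$ and writing $F_b=2|J_b-\mathcal C_b|$ with $\mathcal C_b$ independent of the standard Gaussian $J_b$. The opening discussion about summing flexibilities and edge multiplicities is a detour you never actually use; the argument you run from ``Given a uniform bound\dots'' onward is precisely the paper's.
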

\begin{proof}
Since $\#\{b\in \Lambda^*: |F_b(J_\Lambda)|>\delta\}=|\Lambda^*|-\#\{b\in \Lambda^*: |F_b(J_\Lambda)|\leq \delta\}$,
it suffices to show that for given $\e>0$ there is a $\delta$ small enough such that
$$
\PP\left(\#\{b\in \Lambda^*: |F_b(J_\Lambda)|\leq \delta\}> \e |\Lambda^*|\right)<\e\ .
$$
Markov's inequality implies that 
$$
\begin{aligned}
\PP\left(\#\{b\in \Lambda^*: |F_b(J_\Lambda)|\leq \delta\}> \e |\Lambda^*|\right)
&\leq \frac{1}{\e |\Lambda^*|} \E[\#\{b\in \Lambda^*: |F_b(J_\Lambda)|\leq \delta\}]\\
&= \frac{1}{\e |\Lambda^*|} \sum_{b\in \Lambda^*}\PP(|F_b(J_\Lambda)|\leq \delta)\ .
\end{aligned}
$$
We show that $\PP(|F_b(J_\Lambda)|\leq \delta\})<c\delta$ (uniformly on the edges $b$) for some $c>0$. The claim then follows by taking $\delta=\e^2/c$.
The key observation is that conditioned on $(J_{\Lambda^c}, \vec E)$, the states $\sigma^{\pm,b}(J_\Lambda)$ are independent of $J_b$.
This is because the contribution of $J_b$ in the difference of energies on the right side of \eqref{eqn: diff} cancels
when we restrict to $\eta$'s with $\eta_b=+1$ (or $\eta_b=-1$).
In particular, this means that we can write the flexibility \eqref{eqn: flex} as
$$
F_b(J_\Lambda)=2|J_b- \mathcal C_b|\ ,
$$
where $\mathcal C_b$ is a measurable function that only depends on $\vec E$ and $(J_e; e\in \Lambda^*, e\neq b)$, see also Remark \ref{rem: examples}. 
Therefore, the variable $J_b$ is independent of $\mathcal C_b$ under $\PP$ (by independence in Lemma \ref{lem: metastate}), and has the standard Gaussian distribution.
This implies 
\begin{equation}
\label{eqn: flex estimate}
\PP(|F_b(J_\Lambda)|\leq \delta\})=\PP(|J_b- \mathcal C_b|\leq \delta)\leq \PP(|J_b|\leq \delta)\ .
\end{equation}
It remains to observe that $\nu\{|J_b|\leq \delta\}\leq 2\delta/\sqrt{2\pi}$ to finish the proof.
\end{proof}

We now have all the ingredients to prove Theorem \ref{thm: ADC}.
\begin{proof}[Proof of Theorem \ref{thm: ADC}]
Fix $\e>0$.
From the definition \ref{df: ADC}, we need to find $C=C(\e)$ and a subset $A_\e$ of $(J, J',\vec E)$ with $\PP(A_\e)>1-\e$ on which
$$
\#\{b\in \Lambda^*: \sigma_b(t)=\sigma_b(0),\ \forall t\leq C|\Lambda|^{-\alpha}\}>(1-\e)|\Lambda^*|\ .
$$
By Proposition \ref{prop: criterion}, this would follow if we find $C$ and $A_\e$ on which
$$
\#\{b\in \Lambda^*: F_b(t)>0,\ \forall t\leq C|\Lambda|^{-\alpha}\}>(1-\e)|\Lambda^*|\ .
$$
We write $B_\e$ for the event in Lemma \ref{lem: flex}. 


Consider the event $\widetilde B_\e=\{\max_{e\in \Lambda^*} (|J_e|\vee |J'_e|)<\widetilde C\sqrt{\log |\Lambda|}\}$. 
A standard argument using Gaussian estimates shows that there exists $\widetilde C=\widetilde C(\e)$ large enough such that
$\PP(\widetilde B_\e)>1-\e$. We take $A_\e= B_\e\cap \widetilde B_\e$. We have by construction $\PP(A_\e)>1-2\e$. 
From Proposition \ref{prop: flex diff} and Equation \eqref{eqn: droplet ass}, it follows that on $(1-\e)|\Lambda^*|$ edges 
\begin{equation}
\label{eqn: flex ineq}
\begin{aligned}
F_b(t)\geq F_b(0)- 6 \sqrt{t} \cdot \max_{e\in \Lambda^*}(|J_e|\vee |J_e'|)\cdot \max_{s\leq t}|\partial D_b(s)|
&\geq \delta - 6  \widetilde C \sqrt{t}\cdot \sqrt{\log |\Lambda|} \cdot C |\Lambda|^\gamma \ .
\end{aligned}
\end{equation}
Taking $\alpha >2\gamma$, we conclude that $F_b(t)>\delta/2$ for $t\leq (6 C \widetilde C\delta)^{-2} |\Lambda|^{-\alpha}$ and $\Lambda$ large enough.
This completes the proof of the theorem.
\end{proof}

\begin{rem}
\label{rem: estimate}
{\rm 
The inequality \eqref{eqn: flex ineq} is far from optimal in general as it does not take into account the dependence between the droplet $\mathcal D_b$ and the couplings $J_\Lambda$, $J_\Lambda'$. The droplet $\mathcal D_b$ is special as it optimizes the energy on its boundary.
Here we bounded the value of the couplings on the boundary in an elementary way by the size of the boundary times the maximal value of the couplings in the whole box. 
The factor $\log |\Lambda|$ we get from this procedure is one of the reason why we cannot handle the case $\alpha=\gamma$.
To improve the result, one would have to develop a better understanding of the delicate  connection between the geometry of the underlying lattice and the extreme statistics of the couplings.
}
\end{rem}
Similar ideas gives weaker uniform bounds for the variance.
\begin{proof}[Proof of Corollary \ref{cor: variance one}]
Note first that the assumption $\PP(\mathcal I)>0$ implies that there exist an edge $b\in \Lambda^*$ and $c>0$ (both independent of $\Lambda$) such that for $\Lambda$ large enough $\E[1-\sigma_b^1\sigma_b^2]>c$. 
This is because Equation \eqref{eqn: fatou} implies
$$
\liminf_{\Lambda\to \Z^d} \frac{1}{|\Lambda^*|}\sum_{b\in \Lambda^*} \E[1-\sigma_b^1\sigma_b^2]>0\ .
$$
In particular, for $\Lambda$ large enough we must have $\sum_{b\in \Lambda^*} \E[1-\sigma_b^1\sigma_b^2]>c |\Lambda^*|$ for some $c>0$. This implies the claim.
Fix such an edge.

We consider the interpolation on this single edge $b$, that is, we take $J_\Lambda(t)$ as $J_e(t)=\ee^{-t}J_e+\sqrt{1-\ee^{-2t}}J_e'$ for $e=b$ and $J_e(t)=J_e$ for $e\neq b$.
In this setting, Lemma \ref{lem: variance} and the same reasoning as in the proof of Theorem \ref{thm: main} with $Q_\Lambda(\sigma^1,\sigma^2)$ replaced by $\sigma^1_b\sigma^2_b$ and $\|\sigma -\sigma'\|$ by $(2-2\sigma^2_b\sigma^2_b)^{1/2}$ gives the bound
$$
\begin{aligned}
&\var\Big(H_{\Lambda,\J}(\sigma^1(0))-H_{\Lambda,\J}(\sigma^2(0)) \Big)\\
&\geq 
 2\int_0^t\left\{\E\Big[1-\sigma^1_b(0)\sigma^2_b(0)\Big]-\sqrt{2}\sum_{i=1,2}\Big( \E\Big[1-\sigma^i_b(0)\sigma^i_b(s)\Big]\Big)^{1/2}\right\}\ee^{-s}\rd s\ .
\end{aligned}
$$
Proceeding as in \eqref{eqn: overlap estimate} (since the first term in the bracket is greater than $c$ independently of $\Lambda$) it remains to find for any $\e>0$ an event $A_\e$ and $C=C(\e)>0$ such that $\PP(A_\e)>1-\e$, and on $A_\e$
$$
\sigma_b^1(t)=\sigma_b^1(0) \qquad \forall t\leq C\ .
$$
By Proposition \ref{prop: criterion}, this holds if $F_b(t)>0$ for $ t\leq C$. 
We take $A_\e=B_\e \cap \widetilde B_\e$ for the events $B_\e=\{F_b(0)>\delta\}$ and $\widetilde B_\e=\{ (|J_b|\vee |J'_b|)<\widetilde C\}$.
Recall from Remark \ref{rem: examples} that $F_b(0)=2|J_b-\mathcal C_b|$ and that $J_b$ is independent of $\mathcal C_b$. 
In particular, we get as in \eqref{eqn: flex estimate} that $\PP(B_\e)>1-\e$ by picking $\delta$ small enough. 
Moreover, $\widetilde C$ can be taken large enough so that $\PP(\widetilde B_\e)>1-\e$. This implies $\PP(A_\e)>1-2\e$. 
On this event, we get the same way as in Proposition \ref{prop: flex diff} that
$$
F_b(t)\geq F_b(0)- 6\sqrt{t} \cdot (|J_b|\vee |J'_b|) \geq \delta - 6\widetilde C\sqrt{t}\ .
$$
It remains to take $C=(12\widetilde C\delta)^{-2}$ to ensure that $F_b(t)\geq \delta/2$ for $t\leq C$ thereby proving the corollary.
\end{proof}
One might expect the proof to hold by simply dropping all but a single edge in \eqref{eqn: variance}.
However, all couplings in $\Lambda$ would then be perturbed leading to a worse estimate of the flexibility in Proposition \ref{prop: flex diff}.

\section{Relations to Scaling Theories}
\label{sec:scaling}

Some of the above results have interesting consequences when combined with non-rigorous scaling theories of the spin glass phase that have been proposed in the theoretical physics literature~\cite{Mac84,BM85,FH86,FH88}.
The scaling-droplet picture is one of several competing theories attempting to describe the low-temperature thermodynamic properties of the spin glass phase, and the results presented elsewhere in this paper by themselves neither favor nor disfavor any of these. However, they do shed additional light on the consequences of some of the assumptions made in the scaling-droplet picture, and these will be discussed in this section. Because scaling theories represent a non-rigorous approach (so far) to the study of the spin glass phase, no attempt will be made at mathematical rigor in this section (although the conjectures and results will be stated precisely); our goal is simply to explore what our rigorous results imply for one approach to understanding finite-dimensional spin glasses.

Before turning to scaling theories, we present a simple bound on the parameter~$\alpha$ introduced in Definition~\ref{df: ADC} that provides a necessary condition for the presence of incongruence. This relies on an {\it upper\/} bound on fluctuations of (free) energy differences derived elsewhere but never published~\cite{AFunpub,NSunpub} (however, a statement and proof of the bound can be found in~\cite{Stein16}). The statement of the corresponding theorem is as follows:
\begin{thm}
\label{thm:upperbound} (Aizenman-Fisher-Newman-Stein)
 Let $F_P$ be the free energy of the finite-volume Gibbs state generated
  by Hamiltonian~(\ref{eqn: H}) in a box $\Lambda$ of volume
  $L^d$ using periodic boundary conditions, and let $F_{AP}$ be that generated
  using antiperiodic boundary conditions. Let $X_\Lambda = F_P -
  F_{AP}$. Then ${\rm Var}(X_\Lambda)\le{\rm const.}\times L^{d-1}$, where
  ${\rm Var}(\cdot)$ denotes the variance over all of the couplings inside
  the box.
\end{thm}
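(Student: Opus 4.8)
The plan is to reduce the antiperiodic free energy to a sign flip of the couplings on a single ``seam,'' then exploit a reflection symmetry that forces a conditional expectation to vanish, after which a Poincar\'e estimate over \emph{only} the seam couplings closes the bound. First I would recall the standard gauge identity relating the two boundary conditions. On the torus $\Lambda$, imposing antiperiodic boundary conditions in a fixed coordinate direction is equivalent, after the gauge transformation $\eta_x\mapsto s_x\eta_x$ with $s_x=-1$ on one side of a hyperplane, to keeping periodic boundary conditions but reversing the sign of the couplings on the set $S$ of the $L^{d-1}$ bonds crossing that hyperplane. Writing $J^{S}$ for the configuration with $J^{S}_e=-J_e$ for $e\in S$ and $J^{S}_e=J_e$ otherwise, this gives $F_{AP}(J)=F_P(J^{S})$, so that
\[
X_\Lambda = F_P(J)-F_P(J^{S}).
\]
In this form all dependence on the boundary condition is localized in the signs of the $|S|=L^{d-1}$ seam couplings.

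The key step is to condition on the off-seam couplings $J_{S^c}=(J_e)_{e\notin S}$ and observe that the conditional mean of $X_\Lambda$ vanishes. Indeed, the reflection $R:J_S\mapsto -J_S$ (flipping every seam coupling, leaving $J_{S^c}$ fixed) sends $J\mapsto J^{S}$ and hence maps $X_\Lambda=F_P(J)-F_P(J^{S})$ to $F_P(J^{S})-F_P(J)=-X_\Lambda$. Since the $J_e$ are i.i.d.\ centered Gaussians, the conditional law of $J_S$ given $J_{S^c}$ is invariant under $R$, so $\E[X_\Lambda\mid J_{S^c}]=\E[X_\Lambda\circ R\mid J_{S^c}]=-\E[X_\Lambda\mid J_{S^c}]$, forcing $\E[X_\Lambda\mid J_{S^c}]=0$. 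The conditional variance decomposition then loses its ``between'' term and gives $\var(X_\Lambda)=\E\big[\var(X_\Lambda\mid J_{S^c})\big]$.

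It then remains to bound the conditional variance over the $L^{d-1}$ seam couplings alone. Given $J_{S^c}$, the map $J_S\mapsto X_\Lambda$ is a smooth function of standard Gaussians, and writing $\langle\,\cdot\,\rangle_P$, $\langle\,\cdot\,\rangle_{AP}$ for the two Gibbs averages one has, for $e\in S$, $\partial_{J_e}F_P=-\langle\eta_e\rangle_P$ and $\partial_{J_e}F_{AP}=\langle\eta_e\rangle_{AP}$ (the extra sign coming from $J^{S}_e=-J_e$), whence $\partial_{J_e}X_\Lambda=-\langle\eta_e\rangle_P-\langle\eta_e\rangle_{AP}$, which is bounded by $2$ because edge averages lie in $[-1,1]$. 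Applying the Gaussian Poincar\'e inequality in the seam variables — which follows from Lemma~\ref{lem: gaussian variance} used conditionally, after bounding the integrand by $\E[(\partial_{J_e}X_\Lambda)^2]$ via stationarity — yields $\var(X_\Lambda\mid J_{S^c})\le\sum_{e\in S}\E\big[(\partial_{J_e}X_\Lambda)^2\mid J_{S^c}\big]\le 4|S|=4L^{d-1}$, and taking expectations finishes the proof with ${\rm const.}=4$.

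The crux I would emphasize is the conditional-symmetry step, not the computations around it. A naive Poincar\'e (or martingale) bound applied to $X_\Lambda$ as a function of all $\sim L^d$ couplings only gives $O(L^d)$, since the bulk derivatives $\langle\eta_e\rangle_P-\langle\eta_e\rangle_{AP}$ need not be small edge by edge — the periodic and antiperiodic states may differ on a domain wall of far more than $L^{d-1}$ bonds. Recognizing that the off-seam couplings contribute nothing to the mean, so that only the $L^{d-1}$ seam degrees of freedom carry the fluctuations, is precisely what collapses the estimate to the correct order; the remaining ingredients, the gauge identity $F_{AP}(J)=F_P(J^{S})$ and the derivative bound $|\langle\eta_e\rangle|\le 1$, are routine.
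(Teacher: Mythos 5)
The paper itself does not prove Theorem~\ref{thm:upperbound}: it is imported from unpublished work of Aizenman--Fisher and Newman--Stein, with the reader referred to \cite{Stein16} for a written proof, so there is no in-paper argument to compare yours against. Judged on its own, your proof is correct, and it is essentially the standard argument for this bound. All three ingredients check out: the gauge identity $F_{AP}(J)=F_P(J^S)$ localizes the boundary-condition dependence to the $L^{d-1}$ seam bonds; the sign-flip symmetry $J_S\mapsto -J_S$ makes $X_\Lambda$ conditionally antisymmetric, so $\E[X_\Lambda\mid J_{S^c}]=0$ and the between-group term of the conditional variance decomposition vanishes --- correctly identified as the crux, since a naive Poincar\'e bound over all $O(L^d)$ couplings only gives $O(L^d)$; and the conditional Gaussian Poincar\'e inequality over the seam variables with $|\partial_{J_e}X_\Lambda|=|\langle\eta_e\rangle_P+\langle\eta_e\rangle_{AP}|\le 2$ yields $\var(X_\Lambda\mid J_{S^c})\le 4L^{d-1}$. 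Two minor remarks: deriving Poincar\'e from Lemma~\ref{lem: gaussian variance} uses Cauchy--Schwarz together with the equidistribution of $J_\Lambda(s)$ and $J_\Lambda$ (your ``stationarity''), which is fine; and at $T=0$ the same argument survives because the ground-state energy difference is still Lipschitz in $J_S$ with the same gradient bound. The published versions of this bound are usually phrased via an Aizenman--Wehr-style martingale decomposition over the seam couplings as in \cite{AW90}, but that is interchangeable with your conditional Poincar\'e step.
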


\begin{rem}
\rm
Although stated for periodic-antiperiodic boundary conditions, the theorem applies to any pair of gauge-related boundary conditions, such as two fixed~BC's.
\end{rem}

Theorem~\ref{thm:upperbound} has been proved only for finite volumes, but it is reasonable to expect that it applies equally well to free energy fluctuations in finite-volume restrictions of infinite-volume pure or ground states; i.e., for two pairs of boundary conditions arising from two putative ground or pure states drawn from the metastate. We therefore propose the following conjecture:

\begin{conj}
\label{conj:upperbound}
The variance bound of Theorem~\ref{thm:upperbound} extends to $\var\Big(H_{\Lambda,\J}(\sigma^1)-H_{\Lambda,\J}(\sigma^2)\Big)$ of Theorem~\ref{thm: main}; i.e., for any $\sigma^1$ and $\sigma^2$ chosen as in Theorem~\ref{thm: main},
\begin{equation}
\label{eq:upperbound}
\var\Big(H_{\Lambda,\J}(\sigma^1)-H_{\Lambda,\J}(\sigma^2)\Big)\le A L^{d-1}\, ,
\end{equation}
where $A>0$ is a constant and $|\Lambda|=L^d$.
\end{conj}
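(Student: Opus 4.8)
The plan is to work from the exact conditional-variance identity rather than to invoke Theorem~\ref{thm:upperbound} directly: although that theorem (with its Remark) supplies the $L^{d-1}$ bound for any pair of \emph{gauge-related} boundary conditions, two incongruent ground states are by definition \emph{not} gauge-related --- they differ on a positive density of edges rather than across a single codimension-one seam --- so the proven bound does not transfer mechanically, and the content of Conjecture~\ref{conj:upperbound} is precisely that the surface-order bound survives this passage. Combining the independence in Lemma~\ref{lem: metastate} (so that $J_\Lambda$ is Gaussian given $(J_{\Lambda^c},\vec E)$) with the exact Gaussian identity of Lemma~\ref{lem: gaussian variance} and the gradient $\partial_{J_b}\big(H_{\Lambda,\J}(\sigma^1)-H_{\Lambda,\J}(\sigma^2)\big)=-(\sigma^1_b-\sigma^2_b)$ computed in Lemma~\ref{lem: variance}, I would first write
\begin{equation}
\label{eqn: exact cond var}
\E\Big[\var\big(H_{\Lambda,\J}(\sigma^1)-H_{\Lambda,\J}(\sigma^2)\,\big|\,J_{\Lambda^c},\vec E\big)\Big]=\int_0^\infty\sum_{b\in\Lambda^*}\E\Big[\big(\sigma^1_b(0)-\sigma^2_b(0)\big)\big(\sigma^1_b(s)-\sigma^2_b(s)\big)\Big]\ee^{-s}\,\rd s .
\end{equation}
The conjectured bound then splits into two tasks: bounding the right-hand side of \eqref{eqn: exact cond var} by $O(L^{d-1})$, and bounding the remaining conditional-mean term $\var\big(\E[\,\cdot\,|\,J_{\Lambda^c},\vec E]\big)$ separately.

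The first task reduces the conjecture to a \emph{presence}-of-chaos estimate. At $s=0$ the summand in \eqref{eqn: exact cond var} equals $2|\Lambda^*|\,\E[1-Q_\Lambda(\sigma^1,\sigma^2)]$, which is $\Omega(L^d)$ under incongruence; for the integral to be only $O(L^{d-1})=O(L^d\cdot L^{-1})$, the integrand must collapse to lower order within $s\lesssim L^{-1}=|\Lambda|^{-1/d}$. In other words, the conjecture forces the overlaps $Q_\Lambda(\sigma^i,\sigma^i(s))$ to have decorrelated from their $s=0$ value by the time $s\approx|\Lambda|^{-1/d}$, i.e.\ the failure of absence of disorder chaos at every scale $\alpha<1/d$. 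This is consistent with Corollary~\ref{cor: ADC-> variance}, whose lower bound $|\Lambda|^{1-\alpha}$ exceeds $L^{d-1}$ precisely for $\alpha<1/d$; the two bounds meet at the single value $\alpha=1/d$. The plan for this task is therefore to prove a quantitative fast-decorrelation statement --- that a density-one fraction of edges has already flipped under a perturbation of size $|\Lambda|^{-1/d}$ --- uniformly over the metastate.

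The main obstacle is that the required estimate runs \emph{opposite} to everything the present framework controls. Through the flexibility $F_b$ and Proposition~\ref{prop: flex diff}, Section~\ref{sect: metastate} produces \emph{lower} bounds on how long an edge persists, and Theorem~\ref{thm: ADC} converts a small critical-droplet boundary into \emph{absence} of chaos; the conjecture instead demands an \emph{upper} bound on the persistence time of a typical edge, equivalently a lower bound on the rate at which the flexibilities are driven to zero under the perturbation. No first-order argument can supply this: for \emph{fixed} configurations differing on $\Omega(L^d)$ edges the variance is exactly $\Omega(L^d)$, and by the envelope identity the ground-state gradient equals the fixed-configuration gradient, so any bound that sees only the gradient --- Gaussian Poincar\'e, or equivalently the $s=0$ term of \eqref{eqn: exact cond var} --- is pinned at $O(L^d)$. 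The entire gain down to $L^{d-1}$ must come from the genuinely higher-order manner in which the ground states turn with the couplings, governed by the delicate interplay between droplet geometry and the extremes of $J_\Lambda$ flagged in Remark~\ref{rem: estimate}; and the conditional-mean term $\var(\E[\,\cdot\,|\,J_{\Lambda^c},\vec E])$ would in addition have to be controlled by reintroducing the seam structure underlying Theorem~\ref{thm:upperbound}. Handling these two effects at once is exactly what is missing, which is why the statement is offered only as a conjecture.
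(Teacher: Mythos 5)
There is nothing to compare against: the statement is labelled a conjecture, and the paper supplies no proof of it. Its only support in the paper is Theorem~\ref{thm:upperbound} (the Aizenman--Fisher--Newman--Stein surface-order upper bound, proved for deterministic gauge-related boundary conditions in finite volume) together with the heuristic that the same bound should persist when the boundary conditions are instead drawn from a metastate. Your proposal correctly recognizes this and, to its credit, does not pretend to close the gap; what you have written is a sound analysis of what a proof would require rather than a proof.

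That said, your supporting discussion is accurate and in places sharper than the paper's own motivation. The identity you write for $\E[\var(\cdot\mid J_{\Lambda^c},\vec E)]$ is exactly the unrestricted version of what the paper derives in Lemma~\ref{lem: variance}; your observation that the $s=0$ integrand is $2|\Lambda^*|\,\E[1-Q_\Lambda(\sigma^1,\sigma^2)]=\Omega(L^d)$ under incongruence, so that the conjecture forces decorrelation by $s\approx|\Lambda|^{-1/d}$, is precisely the contrapositive of the paper's Claim~\ref{claim:alphad}; and your point that the whole framework of Section~\ref{sect: metastate} (flexibilities, Proposition~\ref{prop: flex diff}, Theorem~\ref{thm: ADC}) only produces \emph{lower} bounds on edge persistence, whereas the conjecture needs an \emph{upper} bound, is a correct identification of why the present methods cannot yield it. One small correction to your diagnosis of why Theorem~\ref{thm:upperbound} does not transfer: any two fixed boundary conditions on $\partial\Lambda$ \emph{are} gauge-related (the gauge transformation is supported on the surface-order set of bonds in $\partial\Lambda$), so the obstruction is not that incongruent states differ on a bulk set of edges per se, but that the boundary conditions induced by metastate-sampled ground states are themselves functions of the couplings, which breaks the martingale/telescoping argument underlying the AFNS bound. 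You would also, as you note, still have to control the conditional-mean term $\var\big(\E[\,\cdot\mid J_{\Lambda^c},\vec E]\big)$, which the Gaussian interpolation identity does not see at all.
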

Because scaling relations are typically expressed in terms of $L$ rather than $|\Lambda|$, the relation $|\Lambda|=L^d$ will be assumed for the remainder of this section.

\medskip

Corollary~\ref{cor: ADC-> variance} and Conjecture~\ref{conj:upperbound} when combined lead immediately to our first result, which because it relies on a conjecture will be stated as a claim rather than as a corollary:

\begin{claim}
\label{claim:alphad}
In order for incongruent states to appear in the zero-temperature metastate, it is necessary that $\alpha\ge 1/d$.
\end{claim}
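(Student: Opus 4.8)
The plan is to combine the lower bound from Corollary~\ref{cor: ADC-> variance} with the upper bound from Conjecture~\ref{conj:upperbound} and read off the constraint on $\alpha$. Suppose, toward establishing the necessary condition, that incongruent states appear in the zero-temperature metastate, so that $\PP(\mathcal I)>0$, and suppose there is absence of disorder chaos at some scale $\alpha$ with $0\le\alpha\le 1$. Corollary~\ref{cor: ADC-> variance} then supplies a constant $C>0$, independent of $\Lambda$, with
\begin{equation}
\var\Big(H_{\Lambda,\J}(\sigma^1)-H_{\Lambda,\J}(\sigma^2)\Big)\ge C\,|\Lambda|^{1-\alpha}\ .
\end{equation}

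Next I would invoke Conjecture~\ref{conj:upperbound}, which asserts the matching upper bound
\begin{equation}
\var\Big(H_{\Lambda,\J}(\sigma^1)-H_{\Lambda,\J}(\sigma^2)\Big)\le A\,L^{d-1}
\end{equation}
for a constant $A>0$. Using the convention $|\Lambda|=L^d$ fixed for this section, the right-hand side is $A\,|\Lambda|^{(d-1)/d}$. Chaining the two displays gives, for all $\Lambda$ large enough,
\begin{equation}
C\,|\Lambda|^{1-\alpha}\le A\,|\Lambda|^{(d-1)/d}\ .
\end{equation}

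The conclusion now follows by a growth comparison: dividing through by $|\Lambda|^{(d-1)/d}$ yields $(C/A)\,|\Lambda|^{(1-\alpha)-(d-1)/d}\le 1$ uniformly in large $\Lambda$. Since $|\Lambda|\to\infty$, this is possible only if the exponent is nonpositive, i.e. $1-\alpha-\frac{d-1}{d}\le 0$, which rearranges to $\alpha\ge 1-\frac{d-1}{d}=\frac1d$. (If instead $1-\alpha>(d-1)/d$, the left side would diverge, contradicting the bound.) Thus the presence of incongruence forces $\alpha\ge 1/d$, as claimed.

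I do not anticipate a genuine obstacle here, since the statement is flagged as a claim precisely because it rests on the unproven Conjecture~\ref{conj:upperbound}; the only point requiring mild care is bookkeeping between the two natural size parameters $|\Lambda|$ and $L$, and making sure the $\Lambda$-independence of both constants $C$ and $A$ is used so that the exponent comparison is legitimate in the large-volume limit rather than being absorbed into a volume-dependent prefactor.
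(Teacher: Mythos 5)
Your proposal is correct and is exactly the paper's argument: the paper states that Claim~\ref{claim:alphad} follows immediately from combining the lower bound of Corollary~\ref{cor: ADC-> variance} with the conjectured upper bound of Conjecture~\ref{conj:upperbound}, and your exponent comparison $1-\alpha\le (d-1)/d$ is the intended (and only) step. The bookkeeping between $|\Lambda|$ and $L$ and the $\Lambda$-independence of the constants are handled correctly.
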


Consequently, incongruent states are ruled out if $\alpha<1/d$.

\subsection{Implications for Droplet-Scaling Theories}
\label{subsubsec:ds}

Droplet-scaling theories remain one of the main contenders for the correct description of the spin glass phase in finite dimensions.  The primary assumption~\cite{FH86,FH88} of the droplet picture of Ising spin glasses is well-known (in what follows, we restrict the discussion to zero temperature): in any dimension in which the spin glass phase exists, the minimal excitation above the ground state on length scale~$L$ about a fixed point (call it the origin) is a compact droplet of order~$L^d$ coherently flipped spins with an energy cost of $L^\theta$. The (dimension-dependent) exponent $\theta$ originally arose from scaling theories that examined the properties of a disordered zero-temperature fixed point; for any dimension in which a low-temperature spin glass phase exists, $\theta>0$. Fisher and Huse~(FH)  moreover argued that in any dimension, $\theta\le (d-1)/2$.

There are several possible versions of the droplet-scaling approach. In what follows, we will assume the simplest possible version --- what can justifiably be called a ``minimal'' droplet picture.

To begin, consider all compact, connected clusters of $N$ spins containing the origin and with $L^d\le N\le (2L)^d$. Then the droplet theory (at zero temperature) makes the following assumptions~\cite{FH86,FH88}:

\begin{enumerate}[label=(\roman*)]

\medskip

\item The distribution $\rho_L(E_L)$ of minimal droplet energies has the scaling form
\begin{equation}
\label{eq:scaling}
\rho_L(E_L)\approx\frac{1}{\Upsilon L^{\theta_{\rm e}}}{\tilde\rho}\left[\frac{E_L}{\Upsilon L^{\theta_{\rm e}}}\right]\nonumber\, ,
\end{equation}
where $\Upsilon$ is constant and of order the standard deviation of the coupling distribution and $\tilde\rho(0)>0$. In other words, the typical minimal droplet energy is order $L^{\theta_{\rm e}}$, but there is a probability falling off as $L^{-\theta_{\rm e}}$ that the minimal droplet energy is of  order~one. (The notation $\theta_{\rm e}$ --- ``${\rm e}$" for excitation --- rather than simply $\theta$ is ours and not FH's; the reason for this notation will be discussed momentarily.)

\medskip

\item The surface area of the droplet boundary scales as $L^{d_s}$, where $d-1<d_s<d$.~\footnote{At first glance it might appear that the condition $d_s < d$ is already incompatible with the existence
of incongruent states. However, it is neither a necessary nor sufficient condition for incongruence to be absent.}
(Recent simulations of a related quantity in \cite{WMK18}, namely the fractal dimension of the interface induced by switching from periodic to antiperiodic boundary conditions,  find that $d_s<d$ for $d<6$, and seems to approach $d$ at $d=6$.)

\medskip

\item Energy difference fluctuations are governed by a (dimension-dependent) ``stiffness exponent'' $\theta_{\rm s}$ (again, our notation, not FH's), which governs the size of the free energy fluctuations when one switches from, say, periodic to antiperiodic boundary conditions in a volume $\Lambda_L$. That is, using the notation of Theorem~\ref{thm:upperbound},
\begin{equation}
\label{eq:stiffness}
aL^{2\theta_{\rm s}}\le {\rm Var}(X_\Lambda)\le bL^{2\theta_{\rm s}}\nonumber\, ,
\end{equation}
where $0<a<b<\infty$ are constants. In order for a stable spin glass phase to exist in dimension~$d$, it is necessary that $\theta_{\rm s}>0$.

\medskip

\item Droplet excitation energies scale in the same way as ground state interface energies; i.e., $\theta_{\rm s}=\theta_{\rm e}$. 

\medskip

This last assumption leads to what we referred to earlier as a ``minimal'' droplet-scaling theory, and has been the subject of some debate (see, for example,~\cite{KP04,KB05,A90}). Additional exponents have been proposed in various places, and in non-scaling theories there are multiple types of excitations and interfaces with different exponents~\cite{KP04}. However, the version of droplet theory with $\theta_{\rm s}=\theta_{\rm e}$ is the simplest and cleanest, has been shown to hold in some special cases~\cite{BKM03}, and corresponds to the original theory as proposed by FH. We therefore adopt it in what follows, and hereafter set $\theta_{\rm e}=\theta_{\rm s}=\theta$.

\medskip

\item $\theta\le (d-1)/2$.

\medskip

At least insofar as this refers to the stiffness exponent, this has been rigorously proved, as noted above.

\end{enumerate}

\bigskip

After the scaling theories had been introduced, it was quickly noted that they implied what came to be known as ``temperature chaos'', namely a rearrangement on all sufficiently large lengthscales of the pure state correlations upon an infinitesimal change of temperature~\cite{BM87,FH88}. This is believed to be closely related to disorder chaos, and the behavior of the two is expected to be similar; in particular, the exponent governing the lengthscale beyond which edge and spin overlaps fall to zero is the same in all treatments of both kinds of spin glass ``chaos''. 

In analyzing disorder chaos, one begins by considering (see, for example,~\cite{KB05}) a small perturbation of the couplings of the form
\begin{equation}
\label{eq:pert}
J_{xy}\to J'_{xy}=\frac{J_{xy}+\eta_{xy}\Delta J}{\sqrt{1+(\Delta J)^2}}
\end{equation}
where $\eta_{xy}$ is a normally distributed random variable with zero mean and unit variance. 

Scaling theory then predicts~\cite{BM87,FH88,KB05} that a new ground state will appear outside of a characteristic length $\ell_c$ that is governed by the various exponents introduced above. We therefore add this to the above assumptions:

\medskip

(vi) Upon changing the couplings in the manner~(\ref{eq:pert}) above, the ground state rearranges beyond a lengthscale~$\ell_c$ governed by
\begin{equation}
\label{eq:lc}
\ell_c(\Delta J)=\Delta J^{-1/\xi}
\end{equation}
where the new exponent $\xi=d_s/2-\theta$. For a system of linear size $L$, the spin overlap $q$ obeys the scaling law~\cite{KB05}
\begin{equation}
\label{eq:spin}
\langle q(\Delta J,L)\rangle=F(L/\ell_c)=F(\Delta J^{1/\xi}L)
\end{equation}
where $F(x)\approx 1-ax^\xi$ for $x\ll 1$ and $F(x)\approx bx^{-d/2}$ for $x\gg 1$. The edge overlap behaves similarly. 

To express this using our notation, we relate $\Delta J$ in~(\ref{eq:pert}) and $t$ in~(\ref{eq:t}). For $\sqrt{t}<\epsilon\ll 1$ and $\Delta J<\epsilon\ll 1$, we have $\Delta J=(2t)^{1/2}+O(\epsilon^2)$.
Therefore to order $\epsilon^2$,
\begin{equation}
\label{eq:t1}
\ell_c= t^{-1/2\xi}\, .
\end{equation}
In~(\ref{eq:t1}) we rescaled $\ell_c$ by a factor of order one to eliminate a multiplicative constant. This has no effect on the analysis to follow.

\medskip

It should be emphasized that Eq.~(\ref{eq:lc}) of (vi) is not a separate assumption: it follows directly from (i) (at least for disorder chaos). For ease of presentation and future reference, it will be listed along with the assumptions above, but it should be kept in mind that it is a prediction of scaling theory, not an assumption.

\bigskip

We now turn to a discussion of what the results proved in this paper imply about the minimal scaling theory described above. One of the central conclusions of the droplet picture is that the ordered spin glass phase consists of a single pair of spin-reversed pure states (at $T>0$) or ground states (at $T=0$) in all dimensions in which a spin glass phase occurs~\cite{FH87}. The argument against the presence of incongruent states relied first on the inequality $\theta\le (d-1)/2$, which (at least as far as the spin glass stiffness is concerned) is no longer in dispute. The second part of the argument relied on a conjecture that the standard deviation of the energy fluctuations arising from the presence of incongruent states would scale at least as fast as the square root of the volume, leading to a contradiction. However, at the time this argument was put forward, there was little firm support for any sort of lower bound on (free) energy fluctuations arising from incongruence.~\footnote{However, as mentioned in the introduction, recent work by the authors in collaboration with J.~Wehr~\cite{ANSW14,ANSW16} has proved a lower bound for the variance scaling with the volume for at least certain pairs of incongruent states.} As a consequence, while it was generally accepted that the droplet theory leads to a two-state picture, the conclusion has remained mostly conjectural. (Some authors assert that the droplet picture simply assumes at the outset that there is only a single pair of ground states~\cite{KP04}.)

\medskip

However, we are now in a position to solidify the argument that the droplet theory indeed leads to a two-state picture, at least insofar as incongruence is concerned. The claim we make is the following: 
\begin{claim}
\label{claim:twostate}
If the scaling-droplet theory as defined by Assumptions (i)-(vi) above is correct, then in any finite dimension the zero-temperature metastate, generated using coupling-independent boundary conditions, is supported on a single pair of ground states.
\end{claim}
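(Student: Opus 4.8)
The plan is to argue by contradiction. Suppose that with positive $\PP$-probability the zero-temperature metastate contains an incongruent pair $(\sigma^1,\sigma^2)$. Since the boundary conditions are coupling-independent, $\PP$ is translation invariant, so (as noted in the introduction) a single disagreement edge already forces a positive density of them; hence ruling out incongruence is precisely what pins the metastate to a single pair of ground states. The strategy is to sandwich the energy-difference variance $V_\Lambda:=\var\big(H_{\Lambda,\J}(\sigma^1)-H_{\Lambda,\J}(\sigma^2)\big)$ between a \emph{lower} bound obtained by feeding the chaos prediction (vi) into Theorem~\ref{thm: main}, and an \emph{upper} bound obtained from the stiffness assumption (iii), and then to extract a contradiction from the exponent relation $\xi=d_s/2-\theta$ together with the surface bound $d_s<d$ of (ii).

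For the lower bound I would insert the scaling overlap into \eqref{eqn: variance bound}. Incongruence gives $\E[1-Q_\Lambda(\sigma^1,\sigma^2)]\geq c_0>0$ for large $\Lambda$ by the Fatou argument of \eqref{eqn: fatou}. For the self-overlap, (vi) together with \eqref{eq:t1} yields, in the regime $L\ll\ell_c$ (equivalently $s\ll L^{-2\xi}$), the behaviour $\E[1-Q_\Lambda(\sigma^i,\sigma^i(s))]\approx a L^{\xi}s^{1/2}$, so the bracket in \eqref{eqn: variance bound} is at least $c_0-2(2a)^{1/2}L^{\xi/2}s^{1/4}$, which remains positive for $s\lesssim L^{-2\xi}$. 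Choosing $t$ of this order, the $s$-integral is of order $L^{-2\xi}$ while the prefactor $2|\Lambda^*|$ is of order $L^{d}$, giving $V_\Lambda\gtrsim L^{d-2\xi}$. This is exactly the statement that the scale of disorder chaos is $\alpha=2\xi/d$.

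For the upper bound I would invoke assumption (iii): within the minimal droplet-scaling theory the fluctuations of such an energy difference are set by the stiffness exponent, $V_\Lambda\leq b\,L^{2\theta}$. Comparing the two bounds forces $d-2\xi\leq 2\theta$, and substituting $\xi=d_s/2-\theta$ collapses this to $d\leq d_s$, contradicting $d-1<d_s<d$ from (ii). Hence no incongruent pair can occur, which is the assertion of the claim. Note that (v), namely $\theta\leq(d-1)/2$, enters only to guarantee $\xi=d_s/2-\theta>0$ (via $d_s>d-1$), so that $\ell_c$ is finite and the scaling forms in (vi) are non-degenerate.

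The delicate point --- and the reason this is stated as a claim rather than a corollary --- is the upper bound. The rigorous input, Theorem~\ref{thm:upperbound} and Conjecture~\ref{conj:upperbound}, only furnishes $V_\Lambda\leq A\,L^{d-1}$; combined with the lower bound this yields merely $\alpha\geq 1/d$, i.e.\ $\xi\geq 1/2$, which with $d_s<d$ forces $\theta<(d-1)/2$ but produces \emph{no} contradiction. The contradiction emerges only upon replacing $L^{d-1}$ by the sharper scaling value $L^{2\theta}$, that is, upon asserting within the scaling picture that the incongruent energy-difference variance is governed by the stiffness exponent rather than merely by the volume. Justifying that identification is the essential non-rigorous step; the matching lower bound likewise presupposes that the overlap scaling (vi) holds down to small $s$ (the regime $L\ll\ell_c$). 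Both are assumptions of the scaling theory rather than theorems, which is why the conclusion is framed as conditional on (i)--(vi).
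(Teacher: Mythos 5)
Your proposal is correct and follows essentially the same route as the paper: identify the scale of disorder chaos as $\alpha=2\xi/d$ from assumption (vi) via Eq.~(\ref{eq:t1}), deduce the lower bound $\var\gtrsim L^{d(1-\alpha)}=L^{d-d_s+2\theta}$ from Theorem~\ref{thm: main}/Corollary~\ref{cor: ADC-> variance}, and contradict the stiffness upper bound $bL^{2\theta}$ of assumptions (iii)--(iv) using $d_s<d$ from (ii). Your closing diagnosis of which steps are non-rigorous, and your observation that only $\xi>0$ (not the full force of $\theta\le(d-1)/2$) is needed, are consistent with the paper's own remarks.
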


\begin{proof}[Proof of Claim~\ref{claim:twostate}]

By Definition~\ref{df: ADC}, the absence of disorder chaos on scale $\alpha$ means that
\begin{equation}
\label{eq:Q}
Q_\Lambda(\sigma^i, \sigma^i(t))>1-\e \text{ on $A_\e$, $i=1,2$,}
\end{equation} 
for all $t\leq C|\Lambda|^{-\alpha}$ and all $\Lambda$ large enough. Using Eq.~(\ref{eq:t1}), this leads to the identification
\begin{equation}
\label{eq:alphaxi}
\alpha=2\xi/d=d_s/d-2\theta/d\, .
\end{equation}
In a dimension with a spin glass phase $0<\theta<d_s/2$, so $0<\alpha<1$. 

Eq.~(\ref{eq:alphaxi}) says that the minimal droplet excitation in a volume of linear dimension~$L$ sets the scale for the absence of disorder chaos.
Moreover, Corollary~\ref{cor: ADC-> variance} says that, if there is ADC at scale $\alpha$, and ${\mathbb\sigma}^1$ and ${\mathbb\sigma}^2$ are incongruent spin configurations in $\Lambda$, then
\begin{equation}
\label{eq:corollary1}
{\rm Var}\left(H_{\Lambda,J}({\bf\sigma}^1)-H_{\Lambda,J}({\bf\sigma}^2)\right)\ge C|\Lambda|^{1-\alpha}\, .
\end{equation}
When combined with~(\ref{eq:alphaxi}), this becomes
\begin{equation}
\label{eq:corollary2}
{\rm Var}\left(H_{\Lambda,J}({\bf\sigma}^1)-H_{\Lambda,J}({\bf\sigma}^2)\right)\ge C L^{d-d_s+2\theta}\ge CL^{2\theta+\delta}\ ,
\end{equation}
where $\delta(d)\equiv d-d_s>0$ using assumption (ii) of the droplet theory.

But by Assumptions (iii) and (iv) of the droplet theory, 
\begin{equation}
\label{eq:corollary3}
{\rm Var}\left(H_{\Lambda,J}({\bf\sigma}^1)-H_{\Lambda,J}({\bf\sigma}^2)\right)\le bL^{2\theta}\, ,
\end{equation}
leading to a contradiction for sufficiently large $L$ and demonstrating the above claim that the minimal droplet theory is indeed a two-state theory.
\end{proof}

It is interesting to note that while the original two-state argument relied on the inequality $\theta\le (d-1)/2$, this is nowhere used in the above argument; indeed, at least for the purposes of the argument, $\theta$ can be anything at all. The other assumptions of the droplet theory were all necessary, however. Of particular interest is that the droplet {\it geometry\/} plays a crucial role in setting the scale of ground state energy difference fluctuations.

\subsection{Further Relations}
\label{subsubsec:further}

We conclude this discussion with an argument that uses no scaling assumptions and arrives at another relation connecting droplet geometries and energies to the presence or absence of incongruence. In this case, however, the droplets under consideration are not low-energy excitations above the ground state but rather the ``critical droplets'', introduced above Theorem~\ref{thm: ADC}, that measure the stability of a given ground state pair. Consider the critical droplet boundary $\partial\mathcal D_b$ (of energy order one). This naturally leads to a new exponent $d_f$, defined as $|\partial\mathcal D_b|={\rm const.}\ L^{d_f}$. Then Theorem~\ref{thm: ADC} provides the relation~$\alpha=2d_f/d$, and Corollary~\ref{cor: ADC-> variance} gives the bound
\begin{equation}
\label{eq:newbound}
{\rm Var}\left(H_{\Lambda,J}({\bf\sigma}^1)-H_{\Lambda,J}({\bf\sigma}^2)\right)\ge {\rm const.}\ L^{d(1-\alpha)}={\rm const.}\ L^{d-2d_f}\, .
\end{equation}
Combining this with~(\ref{eq:corollary3}) then implies that if $d_f<(d-2\theta)/2$, there cannot be incongruent ground states. This result bypasses the issue of whether ~$\theta_{\rm e}=\theta_{\rm s}$; only the ``stiffness''~$\theta_{\rm s}$ enters.

\bibliographystyle{plain}

\bibliography{bib_incongruent}

\end{document}